\DeclareMathOperator*{\argmin}{argmin}   %
\newcommand{\mat}[1]{\bm{#1}}
\newcommand{\vect}[1]{\bm{#1}}
\newcommand{\T}{\mathsf{T}}
\newtheorem{lemma}{Lemma}
\newtheorem{theorem}{Theorem}
\theoremstyle{definition}
\newtheorem{definition}{Definition}
\newtheorem{remark}{Remark}
\newtheorem{assumption}{Assumption}
\title{On randomized sketching algorithms and the Tracy-Widom law}
\author[*]{Daniel Ahfock}
\author[ ]{William J. Astle}
\author[ ]{Sylvia Richardson}
\affil[ ]{MRC Biostatistics Unit, University of Cambridge}
\affil[*]{\texttt{d.ahfock@uq.edu.au}}
\date{}                     
\begin{document}
\maketitle

\begin{abstract}
There is an increasing body of work exploring the integration of random projection into algorithms for numerical linear algebra. The primary motivation is to reduce the overall computational cost of processing large datasets. A suitably chosen random projection can be used to embed the original dataset in a lower-dimensional space such that key properties of the original dataset are retained. These algorithms are often referred to as sketching algorithms, as the projected dataset can be used as a compressed representation of the full dataset. We show that random matrix theory, in particular the Tracy-Widom law, is useful for describing the operating characteristics of sketching algorithms in the tall-data regime when $n \gg d$. Asymptotic large sample results are of particular interest as this is the regime where sketching is most useful for data compression. In particular, we develop asymptotic approximations for the success rate in generating random subspace embeddings and the convergence probability of iterative sketching algorithms. We test a number of sketching algorithms on real large high-dimensional datasets and find that the asymptotic expressions give accurate predictions of the empirical performance.
\end{abstract}

\section{Introduction}
Sketching is a probabilistic data compression technique that makes use of random projection \citep{cormode_sketch_2011, mahoney_randomized_2011, woodruff_sketching_2014}. Suppose interest lies in a $n \times d$ dataset $\mat{A}$. When $n$ and or $d$ are large, typical data analysis tasks will involve a heavy numerical computing load. This computational burden can be a practical obstacle for statistical learning with Big Data. When the sample size $n$ is the computational bottleneck, sketching algorithms use a linear random projection to create a smaller sketched dataset of size $k \times d$, where $k \ll n$. The random projection can be represented as a $k \times n$ random matrix $\mat{S}$, and the sketched dataset $\widetilde{\mat{A}}$ is generated through the linear embedding $\widetilde{\mat{A}}=\mat{S}\mat{A}$. The smaller sketched dataset $\widetilde{\mat{A}}$ is used as a surrogate for the full dataset $\mat{A}$ within numerical routines. Through a judicious choice of the distribution on the random sketching matrix $\mat{S}$, it is often possible to bound the error that is introduced stochastically into calculations given the use of the randomized approximation $\widetilde{\mat{A}}$ in place of $\mat{A}$

The selected distribution of the random sketching matrix $\mat{S}$ can be divided into two categories, data-oblivious sketches, where the distribution is not a function of the source data $\mat{A}$, and data-aware sketches, where the distribution is a function of $\mat{A}$. The majority of data-aware sketches perform weighted sampling with replacement, and are closely connected to finite population survey sampling methods \citep{ma_statistical_2015, quiroz_2018_subsampling}. The analysis of data-oblivious sketches requires different methods to data-aware sketches, as there are no clear ties to finite-population subsampling. In general, data-oblivious sketches generate a dataset of $k$ pseudo-observations, where each instance in the compressed representation $\widetilde{\mat{A}}$ has no exact counterpart in the original source dataset $\mat{A}$. 

Three important data-oblivious sketches are the Gaussian sketch, the Hadamard sketch and the Clarkson-Woodruff sketch. The Gaussian sketch is the simplest of these, where each element in the $k \times n$ matrix $\mat{S}$ is an independent sample from a $N(0, 1/k)$ distribution. The Hadamard sketch uses structured elements for fast matrix multiplication, and the Clarkson-Woodruff uses sparsity in $\mat{S}$ for efficient computation of the sketched dataset. The comparative performance between distributions on $\mat{S}$ is of interest, as there is a trade-off between the computational cost of calculating $\widetilde{\mat{A}}$ and the fidelity of the approximation $\widetilde{\mat{A}}$ with respect to original $\mat{A}$ when choosing the type of sketch. Our results help to establish guidelines for selecting the sketching distribution.

Sketching algorithms are typically framed using stochastic $(\delta, \epsilon)$  error bounds, where the algorithm is shown to attain $(1 \pm \epsilon)$ accuracy with probability at least $1-\delta$ \citep{woodruff_sketching_2014}.  These notions are made more precise in Section \ref{sec:sketching}. Existing bounds are typically developed from a worst-case non-asymptotic viewpoint \citep{mahoney_randomized_2011, woodruff_sketching_2014, tropp_improved_2011}. We take a different approach, and use random matrix theory to develop asymptotic approximations to the success probability given the sketching distortion factor $\epsilon$. 

Our main result is an asymptotic expression for the probability that a Gaussian based sketching algorithm satisfies general $(1 \pm \epsilon)$ probabilistic error bounds in terms of the Tracy-Widom law (Theorem \ref{thm:gaussian_embedding_tw_limit}), which describes the distribution of the extreme eigenvalues of large random matrices \citep{tracy_level_1994, johnstone_distribution_2001}. We then identify regularity conditions where other data-oblivious projections are expected to demonstrate the same limiting behavior (Theorem \ref{thm:data_oblivious_asymptotic_embedding}). If the motivation for using a sketching algorithm is data compression due to large $n$, the asymptotic approximations are of particular interest as they become more accurate as the computational benefits afforded by the use of a sketching algorithm increase in tandem. Empirical work has found that the quality of results can be consistent across the choice of random projections \citep{venkata_johnson_2011, le_2013_fastfood, dahiya_2018_empirical}, and our results shed some light on this issue. An application is to determine the convergence probability when sketching is used in iterative least-squares optimisation.  We test the asymptotic theory and find good agreement on datasets with large sample sizes $n \gg d$. Our theoretical and empirical results show that random matrix theory has an important role in the analysis of data-oblivious sketching algorithms for data compression.

\section{Sketching}
\label{sec:sketching}
\subsection{Data-oblivious sketches}
\label{subsec:sketching}
As mentioned, a key component in a sketching algorithm is the distribution on $\mat{S}$. Four important random linear maps are:

\begin{itemize}
    \item The uniform sketch implements subsampling uniformly with replacement followed by a rescaling step. The Uniform projection can be represented as  $\mat{S}=\sqrt{n/k}\Phi$.  The random matrix $\Phi$ subsamples $k$  rows of $\mat{A}$ with replacement. Element $\Phi_{r,i}=1$ if observation $i$ in the source dataset is selected in the $r$th subsampling round $(r=1, \ldots, k; \ i=1\ldots, n)$. The uniform sketch can be implemented in $O(k)$ time.
    \item A Gaussian sketch is formed by  independently sampling each element of $\mat{S}$ from a $N(0, 1/k)$ distribution. Computation of the sketched data is $O(ndk)$. 
    \item The Hadamard sketch is a structured random matrix \citep{ailon_fast_2009}. The sketching matrix is formed as $\mat{S} = \Phi\mat{H}\mat{D}/\sqrt{k}$, where $\Phi$ is a $k \times n$ matrix and $\mat{H}$ and $\mat{D}$ are both $n \times n$ matrices. The fixed matrix $\mat{H}$ is a Hadamard matrix of order $n$. A Hadamard matrix is a square matrix with elements that are either $+1$ or $-1$ and orthogonal rows. Hadamard matrices do not exist for all integers $n$, the source dataset can be padded with zeroes so that a conformable Hadamard matrix is available. The random matrix $\mat{D}$ is a diagonal matrix where each of the $n$ diagonal entries is an independent Rademacher random variable. The random matrix $\Phi$ subsamples $k$  rows of $\mat{H}$ with replacement. The structure of the Hadamard sketch allows for fast matrix multiplication, reducing calculation of the sketched dataset to $O(nd \log k)$ operations. 

    \item The Clarkson-Woodruff sketch is a sparse random matrix \citep{clarkson_low_2013}. The  projection can be represented as the product of two independent random matrices, $\mat{S} = \mat{\Gamma}\mat{D}$, where $\mat{\Gamma}$ is a random $k \times n$ matrix and $\mat{D}$ is a random $n \times n$ matrix. The matrix $\mat{\Gamma}$ is initialized as a matrix of zeros. In each column, independently, one entry is selected and set to $+1$. The matrix $\mat{D}$ is a diagonal matrix where each of the $n$ diagonal entries is an independent Rademacher random variable. This results in a sparse $\mat{S}$, where there is only one nonzero entry per column. The sparsity of the Clarkson-Woodruff sketch speeds up matrix multiplication, dropping the complexity of  generating the sketched dataset to $O(nd)$.
\end{itemize}

The Gaussian sketch was central to early work on sketching algorithms \citep{sarlos_improved_2006}. The drawback of the Gaussian sketch is that computation of the sketched data is quite demanding, taking $O(ndk)$ operations. As such, there has been work on designing more computationally efficient random projections.

Sketch quality is commonly measured using  $\epsilon$-subspace embeddings  (\citet[Chapter 2]{woodruff_sketching_2014}, \cite{meng_low-distortion_2013}, \cite{yang_implementing_2015}). These are defined below. 

\begin{definition}{\emph{$\epsilon$-subspace embedding}}
\label{defn:epsilon_subspace_embedding}\newline
For a given $n \times d$ matrix $\mat{A}$, we call a $k \times n $ matrix $\mat{S}$ an $\epsilon$-subspace embedding for $\mat{A}$, if for all vectors $\vect{z} \in \mathbb{R}^{d}$
\begin{align*}
(1-\epsilon)|| \mat{A}\vect{z}||_{2}^{2} \le ||\mat{S} \mat{A}\vect{z}||_{2}^{2} \le (1+\epsilon)|| \mat{A}\vect{z}||_{2}^{2}.
\end{align*}
\end{definition}
An $\epsilon$-subspace preserves the linear structure of the original dataset up to a multiplicative $(1 \pm \epsilon)$ factor. Broadly speaking, the covariance matrix of the sketched dataset $\widetilde{\mat{A}}=\mat{S}\mat{A}$ is similar to the covariance matrix of the source dataset $\mat{A}$ if $\epsilon$ is small. Mathematical arguments show that the sketched dataset is a good surrogate for many linear statistical methods if the sketching matrix $\mat{S}$ is an $\epsilon$-subspace embedding for the original dataset, with $\epsilon$ sufficiently small  \citep{woodruff_sketching_2014}. Suitable ranges for $\epsilon$ depend on the task of interest and structural properties of the source dataset \citep{mahoney_structural_2016}. 

The Gaussian, Hadamard and Clarkson-Woodruff projections are popular data-oblivious projections as it is possible to argue that they produce $\epsilon$-subspace embeddings with high probability for an arbitrary data matrix $\mat{A}$.  It is considerably more difficult to establish universal worst case bounds for the uniform projection \citep{drineas_sampling_2006, ma_statistical_2015}.  We include the uniform projection in our discussion as it is a useful baseline. 

\begin{table}[h]
	\centering
\begin{tabular}{@{} l  l l l @{}}
		\toprule
			Sketch & Sketching time  & Required sketch size $k$ \\
			\midrule
			Gaussian  & $O(ndk) $  & $O((d+\log(1/\delta))/\epsilon^2) $\\
			Hadamard & $O(nd \log k)$ & $O((\sqrt{d}+\sqrt{\log n})^2(\log (d/\delta))/ \epsilon^2)$\\
			Clarkson-Woodruff  & $O(nd)$& $O(d^2/(\delta\epsilon^2))$ \\
			Uniform & $O(k)$ &  $-$ & \\
			\bottomrule
	\end{tabular}
	\caption[Properties of different data-oblivious random projections.]{\label{tab:run_time}Properties of different data-oblivious random projections (see \citet{woodruff_sketching_2014} and the references therein). The third column refers to the necessary sketch size $k$ to obtain an $\epsilon$-subspace embedding for an arbitrary $n \times d$ source dataset with at least probability $(1-\delta)$.}
\end{table}

\subsection{Sketching algorithms}
Sketching algorithms have been proposed for key linear statistical methods such as low rank matrix approximation, principal components analysis, linear discriminant analysis and ordinary least squares regression  \citep{mahoney_randomized_2011, woodruff_sketching_2014, erichson_randomized_2016, falcone_2021_matrix}. Sketching has also been investigated for Bayesian posterior approximation \citep{bardenet_note_2015, geppert_random_2017}. A common thread throughout these works is the reliance on the generation of an $\epsilon$-subspace embedding. In general, $\epsilon$ serves an approximation tolerance parameter, with smaller $\epsilon$ guaranteeing higher fidelity to exact calculation with respect to some  divergence measure. 

An example application of sketching is ordinary least squares regression \citep{sarlos_improved_2006}. The sketched responses and predictors are defined as $\widetilde{\vect{y}}=\mat{S}\vect{y}, \widetilde{\mat{X}}=\mat{S}\mat{X}$. Let $\vect{\vect{\beta}}_{F} = \argmin_{\vect{\beta}}\lVert \vect{y}-\mat{X}\vect{\beta} \rVert_{2}^{2}, \vect{\beta}_{S} = \argmin_{\vect{\beta}}\lVert \widetilde{\vect{y}}-\widetilde{\mat{X}}\vect{\beta} \rVert_{2}^{2}$, and $RSS_{F}=\lVert \vect{y}-\mat{X}\vect{\beta}_{F}\rVert_{2}^{2}$. It is possible to establish the concrete bounds, that if ${\mat{S}}$ is an $\epsilon$-subspace embedding for $\mat{A}=(\vect{y}, {\mat{X}})$  \citep{sarlos_improved_2006}, then
\begin{align*}
\lVert \vect{\beta}_{S}- \vect{\beta}_{F}\rVert_{2}^{2} &\le  \dfrac{\epsilon^2}{\sigma_{\text{min}}^2(\mat{X})}RSS_{F}, 
\end{align*}
where $\sigma_{\text{min}}(\mat{X})$ represents the smallest singular value of the design matrix $\mat{X}$. If $\epsilon$ is very small, then $\vect{\beta}_{S}$ is a good approximation to $\vect{\beta}_{F}$.

Given the central role of $\epsilon$-subspace embeddings (Definition \ref{defn:epsilon_subspace_embedding}), the success probability,
\begin{align}
\Pr (\mat{S} \text{ is an $\epsilon$-subspace embedding for $\mat{A}$})
\end{align}
is thus an important descriptive measure of the uncertainty attached to the randomized algorithm. The probability statement is over the random sketching matrix $\mat{S}$ with the dataset $\mat{A}$ treated as fixed. The embedding probability is difficult to characterize precisely using existing theory \citep{venkata_johnson_2011}.  The bounds in Table \ref{tab:run_time} only give qualitative guidance about the embedding probability. Users will benefit from more prescriptive results in order to choose the sketch size $k$, and the type of sketch for applications \citep{grellmann_random_2016, geppert_random_2017, ahfock_statistical_2020, falcone_2021_matrix}.

Another use for sketching is in iterative solvers for ordinary least squares regression. A sketch  $\widetilde{\mat{X}} = \mat{S}\mat{X}$ can be used to generate a random preconditioner, $(\widetilde{\mat{X}}^{\T}\widetilde{\mat{X}})^{-1}$,  that is then applied to the normal equations $\mat{X}^{\T}\mat{X}\vect{\beta}=\mat{X}^{\T}\vect{y}$. Given some initial value $\vect{\beta}^{(0)}$, the iteration is defined as
\begin{align}
    \vect{\beta}^{(t+1)} &= \vect{\beta}^{(t)} + (\widetilde{\mat{X}}^{\T}\widetilde{\mat{X}})^{-1}\mat{X}^{\T}(\vect{y}-\mat{X}\vect{\beta}^{(t)}). \label{eq:basic_iteration}
\end{align}
If $\widetilde{\mat{X}}^{\T}\widetilde{\mat{X}}=\mat{X}^{\T}\mat{X}$ the iteration will converge in a single step. The degree of noise in the preconditioner will be influenced by the sketch size $k$. A sufficient condition for convergence of the iteration \eqref{eq:basic_iteration} is that $\mat{S}$ is an $\epsilon$-subspace embedding for $\mat{X}$ with $\epsilon < 0.5$ \citep{pilanci_iterative_2016}. As is typical with randomized algorithms, we accept some failure probability in order to relax the computational demands.   It is of interest to develop expressions for the failure probability of the algorithm as a function of the sketch size $k$, as this can give useful guidelines in practice. It is possible to establish worst case bounds using the results in Table \ref{tab:run_time}, however we will aim to give a point estimate of the probability. Although it is possible to improve on the iteration \eqref{eq:basic_iteration} using acceleration methods \citep{meng_2014_lsrn, dahiya_2018_empirical, lacotte_2020_limiting}, we focus on the basic iteration to introduce our asymptotic techniques. 

\subsection{Operating characteristics}
 Let the singular value decomposition of the source dataset be given by $\mat{A}=\mat{U}\mat{D}\mat{V}^{\T}$.  Let $\sigma_{\text{min}}(\mat{M})$ and $\sigma_{\text{max}}(\mat{M})$ denote the minimum and maximum singular values respectively, of a matrix $\mat{M}$. Likewise, let $\lambda_{\text{min}}(\mat{M})$ and $\lambda_{\text{max}}(\mat{M})$ denote the minimum and maximum eigenvalues of a matrix $\mat{M}$. It is possible to show
\begin{align}
\Pr (\mat{S} \text{ is an $\epsilon$-subspace embedding for $\mat{A}$}) &=  \Pr(\sigma_{\text{max}}(\mat{I}_{d}-\mat{U}^{\T}\mat{S}^{\T}\mat{S}\mat{U}) \le \epsilon), \label{eq:embedding_probability}
\end{align} 
where $\mat{U}$ is the $n \times d$ matrix of left singular vectors of the source data matrix $\mat{A}$ \citep{woodruff_sketching_2014}. Now as  
\begin{align}
\sigma_{\text{max}}(\mat{I}_{d}-\mat{U}^{\T}\mat{S}^{\T}\mat{S}\mat{U}) &= \text{max}(\lvert 1-\lambda_{\text{min}}(\mat{U}^{\T}\mat{S}^{\T}\mat{S}\mat{U}) \rvert, \lvert 1-\lambda_{\text{max}}(\mat{U}^{\T}\mat{S}^{\T}\mat{S}\mat{U}) \rvert) , \label{eq:max_identity} 
\end{align}
the extreme eigenvalues of $\mat{U}^{\T}\mat{S}^{\T}\mat{S}\mat{U}$ are the critical factor in generating $\epsilon$-subspace embeddings. The convergence behavior of the basic iteration \eqref{eq:basic_iteration} is also tied to the eigenvalues of $\mat{U}^{\T}\mat{S}^{\T}\mat{S}\mat{U}$ where $\mat{A}=\mat{X}$. Providing that $(\widetilde{\mat{X}}^{\T}\widetilde{\mat{X}})$ is of rank $d$, the maximum eigenvalue satisfies
 \begin{align*}
    \lambda_{\text{max}}((\widetilde{\mat{X}}^{\T}\widetilde{\mat{X}})^{-1}\mat{X}^{\T}\mat{X}) &=  \lambda_{\text{max}}((\mat{U}^{\T}\mat{S}^{\T}\mat{S}\mat{U})^{-1}).
 \end{align*}
 From standard results on iterative solvers \citep{hageman_2012_applied}, a necessary and sufficient condition for the iteration to converge is $
    \underset{t \to \infty}{\lim}\lVert\vect{\beta}_{F} - \vect{\beta}^{(t)} \rVert_{2} = 0$ if and only if $ \lambda_{\text{max}}((\widetilde{\mat{X}}^{\T}\widetilde{\mat{X}})^{-1}\mat{X}^{\T}\mat{X}) < 2$. The probability of convergence can then be expressed as
\begin{align}
   \Pr\left(\underset{t \to \infty}{\lim}\lVert\vect{\beta}_{F} - \vect{\beta}^{(t)} \rVert_{2} = 0\right) &=  \Pr(\lambda_{\text{min}}(\mat{U}^{\T}\mat{S}^{\T}\mat{S}\mat{U}) > 0.5). \label{eq:convergence_probability}
\end{align}
Most existing results on the probabilities \eqref{eq:embedding_probability} and \eqref{eq:convergence_probability} are finite sample lower bounds \citep{tropp_improved_2011, nelson_osnap_2013, meng_randomized_2014}. Worst case bounds can be conservative in practice, and there is value in developing other methods to characterize the performance of randomized algorithms \citep{ halko_finding_2011,raskutti_statistical_2014, lopes_2018_error, dobriban_2018_new}. The embedding probability  \eqref{eq:embedding_probability}  and the convergence probability \eqref{eq:convergence_probability} are related to the extreme eigenvalues of $\mat{U}^{\T}\mat{S}^{\T}\mat{S}\mat{U}$. In Section \ref{sec:gaussian} we study this distribution for the Gaussian sketch and develop a Tracy-Widom approximation. The approximation is then extended to the Clarkson-Woodruff and Hadamard sketches in Section \ref{sec:data_oblivious}. 

\section{Gaussian sketch}
\label{sec:gaussian}
\subsection{Exact representations}
\citet[Section 2.3]{meng_randomized_2014} notes that when using a Gaussian sketch, it is instructive to  consider {directly} the distribution of the random variable $
\sigma_{\text{max}}(\mat{I}_{d}-\mat{U}^{\T}\mat{S}^{\T}\mat{S}\mat{U})$ to study the embedding probability \eqref{eq:embedding_probability}. Consider an arbitrary $n \times d$ data matrix $\mat{A}$. As $\mat{S}$ is a matrix of independent Gaussians with mean zero and variance $1/k$,  it is possible to show that
\begin{align*}
\mat{U}^{\T}\mat{S}^{\T}\mat{S}\mat{U} &\sim  \text{Wishart}\left(k, \mat{I}_{d}/k\right) ,
\end{align*}
as $\mat{U}^{\T}\mat{U}=\mat{I}_{d}$. The key term $\mat{U}^{\T}\mat{S}^{\T}\mat{S}\mat{U}$ is in some sense a pivotal quantity, as its distribution is invariant to the actual values of the data matrix $\mat{A}$. When using a Gaussian sketch, the probability of obtaining an $\epsilon$-subspace embedding has no dependence on the number of original observations $n$, or on the values in the data matrix  $\mat{A}$. This is a useful property for a data-oblivious sketch, as it is possible to develop universal performance guarantees that will hold for any possible source dataset. This invariance property is also noted in \citet{meng_randomized_2014}, although the derivation is different.

Let us define the random matrix $\mat{W} \sim \text{Wishart}(k, \mat{I}_{d}/k)$. The success probability of interest can then be expressed in terms of the extreme eigenvalues of the Wishart distribution
The embedding probability of interest has the representation:
\begin{align}
\Pr (\mat{S} \text{ is an $\epsilon$-subspace embedding for $\mat{A}$}) &=
 \Pr\left( | 1- \lambda_{\text{min}}(\mat{W})| \le \epsilon,   | 1- \lambda_{\text{max}}(\mat{W})| \le \epsilon  \right). \label{eq:wishart_embedding_eigenvalues}
\end{align}
where we have made use of the expression for the maximum singular value \eqref{eq:max_identity}.

It is difficult to obtain a mathematically tractable expression for the embedding probability as it involves the joint distribution of the extreme eigenvalues \citep{chiani_2017_probability}. \citeauthor{meng_randomized_2014} forms a lower bound on the probability \eqref{eq:wishart_embedding_eigenvalues} using concentration results on the eigenvalues of the Wishart distribution. 

The convergence probability \eqref{eq:convergence_probability}, can also be related to the eigenvalues of the Wishart distribution. Assuming $k \ge d$, the matrix $\widetilde{\mat{X}}^{\T}\widetilde{\mat{X}}$ has full rank with probability one. As such, using the same pivotal quantity $\mat{U}^{\T}\mat{S}^{\T}\mat{S}\mat{U}$ as before,
\begin{align}
   \Pr\left(\underset{t \to \infty}{\lim}\lVert\vect{\beta}_{F} - \vect{\beta}^{(t)} \rVert_{2} = 0\right)&= \Pr(\lambda_{\text{min}}(\mat{W}) > 0.5), \label{eq:conv_prob}
\end{align}
where $\mat{W}\sim  \text{Wishart}(k, \mat{I}_{d}/k)$. The convergence probability \eqref{eq:conv_prob} has no dependence on the specific response vector $\vect{y}$ or design matrix $\mat{X}$ under consideration. Problem invariance is a  highly desirable property for a randomized iterative solver \citep{roosta_subsampled_2016, lacotte_2020_limiting}. Both the embedding probability and the convergence probability are related to the extreme eigenvalues of the Wishart distribution. The extreme eigenvalues of Wishart random matrices are a well studied topic in random matrix theory \citep{edelman_eigenvalues_1988}, and we can make use of existing results to analyse the operating characteristics of sketching algorithms. In the following section we develop approximations to the embedding probability and the convergence probability in the  asymptotic regime: 
\begin{align}
    n,d,k \to \infty, \quad n > k,  \quad d/k \to \alpha \in (0, 1]. \label{eq:big_data_regime}
\end{align}
The limit is asymptotic in $n$, $d$ and $k$, with the constraint that the number of variables to sketch size tends to a constant $\alpha$. This can be interpreted as a type of Big Data asymptotic, where we consider tall and wide datasets through the limit in $n$ and $d$, and increasing sketch sizes $k$ to cope with the expanding number of variables $d$. Although there is no explicit dependence on $n$ for the finite sample expressions \eqref{eq:embedding_probability} and \eqref{eq:conv_prob} for the Gaussian sketch, the asymptotic limit in $n$ is still used to emphasize that we are taking limits in the tall-data setting. 

\citet{dobriban_2018_new} analyse the mean squared error of single-pass sketching algorithms for linear regression in this asymptotic framework under the assumption of a generative model.  Our analysis is different as we are concerned with the embedding and convergence  probabilities (\eqref{eq:embedding_probability} and \eqref{eq:convergence_probability}), rather than the accuracy of population parameter estimates. In independent work, \citet{lacotte_2020_limiting} study the limiting empirical spectral distribution of Hadamard sketch in the asymptotic regime \eqref{eq:big_data_regime}. Here we are concerned with the fluctuations of the extreme eigenvalues rather than the bulk of the spectrum. 
\subsection{Random matrix theory}
Random matrix theory involves the analysis of large random matrices \citep{bai_spectral_2010}. The Tracy-Widom law is an important result in the study of the extreme eigenvalue statistics \citep{tracy_level_1994}. \citet{johnstone_distribution_2001} showed that Tracy-Widom law gives the asymptotic distribution of the maximum eigenvalue of a $\text{Wishart}(k, \mat{I}_{d}/k)$ matrix after appropriate centering and scaling. In subsequent work \cite{ma_accuracy_2012} showed that the rate of convergence could be improved from ${O}(d^{-1/3})$ to ${O}(d^{-2/3})$ by using different centering and scaling constants than in \cite{johnstone_distribution_2001}. We build from the convergence result given by \citeauthor{ma_accuracy_2012}. 

The \texttt{R} package \texttt{RMTstat} contains a number of functions for working with the Tracy-Widom distribution \citep{johnstone_2014_rmtstat}. The main application of the Tracy-Widom law to statistical inference has been its use in hypothesis testing in high-dimensional statistical models \citep{johnstone_high_2006, bai_spectral_2010}. To the best of our knowledge, the connection to sketching algorithms has not been explored in great depth. The Tracy-Widom law can be used to approximate the embedding probability \eqref{eq:embedding_probability}.

\begin{theorem}
\label{thm:gaussian_embedding_tw_limit}
Suppose we have an arbitrary $n \times d$ data matrix $\mat{A}$ where $n >d$ and $\mat{A}$ is of rank $d$.  Furthermore assume we take a Gaussian sketch of size $k$. Consider the limit in $n, k$ and $d$, such that $d/k \to \alpha$ with $\alpha \in (0, 1]$. Define centering and scaling constants $\mu_{k,d}$ and $\sigma_{k,d}$ as 
\begin{align*}
\mu_{k,d} &= k^{-1}(\sqrt{k-1/2}+\sqrt{d-1/2})^{2}, \quad \sigma_{k,d} = \dfrac{k^{-1}(\sqrt{k-1/2}+\sqrt{d-1/2})}{\left(1/{\sqrt{k-1/2}}+1/{\sqrt{d-1/2}}\right)^{1/3}}. 
\end{align*}
Set $Z \sim F_{1}$ where $F_{1}$ is the Tracy-Widom distribution.  Let $\psi_{n,k,d}$ give the exact embedding probability and let $\widehat{\psi}_{n,k,d}$ give the asymptotic approximation to the embedding probability:
\begin{align*}
    \psi_{n,k,d} &= \Pr \left(\mat{S} \emph{ is an $\epsilon$-subspace embedding for $\mat{A}$}\right), \quad \widehat{\psi}_{n,k,d} = \Pr \left( Z \le \dfrac{\epsilon +1-\mu_{k,d}}{\sigma_{k,d}}\right).
\end{align*}
Then asymptotically in $n, d$ and $k$, for any $\epsilon>0$,
\begin{align*}
\underset{n,d,k \to \infty}{\lim} \left\lvert \psi_{n,k,d} - \widehat{\psi}_{n,k,d}  \right\rvert &=0
\end{align*}
Furthermore, for even $d$, $ \left\lvert \gamma_{n,k,d}-\widehat{\gamma}_{n,k,d} \right\rvert = {O}(d^{-2/3})$.
\end{theorem}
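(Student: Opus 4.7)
The plan is to reduce the two-sided eigenvalue event in the exact representation \eqref{eq:wishart_embedding_eigenvalues} to a one-sided event on $\lambda_{\max}(\mat{W})$, and then invoke the Tracy-Widom convergence rate for the largest eigenvalue of a white Wishart established by \cite{ma_accuracy_2012}. Starting from $\psi_{n,k,d}=\Pr(1-\epsilon\le \lambda_{\min}(\mat{W}),\ \lambda_{\max}(\mat{W})\le 1+\epsilon)$ with $\mat{W}\sim\text{Wishart}(k,\mat{I}_{d}/k)$, I would write
\begin{align*}
\psi_{n,k,d}=\Pr(\lambda_{\max}(\mat{W})\le 1+\epsilon)-\Pr(\lambda_{\max}(\mat{W})\le 1+\epsilon,\ \lambda_{\min}(\mat{W})<1-\epsilon).
\end{align*}
The first term is the Tracy-Widom candidate, and the remaining cross term needs to be shown negligible.

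Under the regime \eqref{eq:big_data_regime}, Marchenko-Pastur gives $\lambda_{\min}(\mat{W})\to(1-\sqrt{\alpha})^{2}$ and $\lambda_{\max}(\mat{W})\to(1+\sqrt{\alpha})^{2}$, and the upper edge sits strictly further from $1$ than the lower edge since $2\sqrt{\alpha}+\alpha > 2\sqrt{\alpha}-\alpha$ for $\alpha>0$. A case split on $\epsilon$ handles everything. If $\epsilon\ge 2\sqrt{\alpha}-\alpha$, then $\{\lambda_{\min}(\mat{W})<1-\epsilon\}$ is exponentially rare by standard large-deviation bounds on the smallest eigenvalue of a white Wishart, so the cross term is negligible compared with the target $O(d^{-2/3})$. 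If $\epsilon<2\sqrt{\alpha}-\alpha$, then automatically $1+\epsilon<(1+\sqrt{\alpha})^{2}$; both $\Pr(\lambda_{\max}(\mat{W})\le 1+\epsilon)$ and $\widehat{\psi}_{n,k,d}$ tend exponentially to zero (the latter because $(\epsilon+1-\mu_{k,d})/\sigma_{k,d}$ diverges to $-\infty$ polynomially in $k$ and the left tail of $F_{1}$ is very thin), so the claim is trivially satisfied.

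With the cross term absorbed, the proof reduces to applying Ma's theorem, which gives $\sup_{s\in\mathbb{R}}\lvert \Pr((\lambda_{\max}(\mat{W})-\mu_{k,d})/\sigma_{k,d}\le s)-F_{1}(s)\rvert = O(d^{-2/3})$ for even $d$ with the specified centering and scaling constants. Setting $s=(\epsilon+1-\mu_{k,d})/\sigma_{k,d}$ yields $\Pr(\lambda_{\max}(\mat{W})\le 1+\epsilon)=\widehat{\psi}_{n,k,d}+O(d^{-2/3})$, and combining with the cross-term estimate gives both displayed claims. The main obstacle is the quantitative control of the lower-tail event, because bare Marchenko-Pastur only gives convergence in probability and not a rate compatible with $O(d^{-2/3})$. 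The cleanest route is to invoke a concentration or large-deviation inequality for the smallest singular value of a Gaussian matrix (in the style of \cite{edelman_eigenvalues_1988}) to obtain an exponential-in-$k$ bound on $\Pr(\lambda_{\min}(\mat{W})<1-\epsilon)$ whenever $1-\epsilon<(1-\sqrt{\alpha})^{2}$, which comfortably absorbs into the $O(d^{-2/3})$ budget inherited from Ma's theorem.
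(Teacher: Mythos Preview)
Your proposal is correct and follows the same overall architecture as the paper's proof: reduce the two-sided eigenvalue event to a one-sided event on $\lambda_{\max}(\mat{W})$ by showing a cross term is negligible, then invoke \cite{ma_accuracy_2012} for the Tracy-Widom convergence of the largest eigenvalue. The paper handles the cross term qualitatively: it packages $(|1-\lambda_{\min}|,|1-\lambda_{\max}|)$ as a random vector converging in probability to the constant $(2\sqrt{\alpha}-\alpha,\,2\sqrt{\alpha}+\alpha)$, observes that this limit lies strictly below the diagonal, and applies the Portmanteau lemma to conclude $\Pr(|1-\lambda_{\max}|\le\epsilon,\ |1-\lambda_{\min}|>\epsilon)\to 0$; a second Portmanteau step disposes of the left tail $\Pr(\lambda_{\max}\le 1-\epsilon)$. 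Your case split on $\epsilon$ together with a large-deviation bound for the smallest singular value is the more quantitative route, and it is arguably what is actually needed: the paper cites Ma only for the uniform error on the $\lambda_{\max}$ distribution and never supplies a rate for the cross term, so its $O(d^{-2/3})$ claim implicitly rests on exactly the kind of exponential concentration you propose to invoke. One small caution: at the borderline $\epsilon=2\sqrt{\alpha}-\alpha$ your first case does not deliver exponential smallness of $\{\lambda_{\min}<1-\epsilon\}$, but there $1+\epsilon<(1+\sqrt{\alpha})^{2}$ so the $\lambda_{\max}$ event already makes the cross term (and $\widehat{\psi}_{n,k,d}$) exponentially small; splitting instead at any fixed threshold strictly between $2\sqrt{\alpha}-\alpha$ and $2\sqrt{\alpha}+\alpha$ avoids the edge case entirely.
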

The proof is given in the supplementary material.

The convergence probability of the iterative algorithm \eqref{eq:convergence_probability} can also be approximated using the Tracy-Widom law. 
\begin{theorem}
\label{thm:gaussian_convergence_tw}
Suppose we have an arbitrary $n \times d$ data matrix $\mat{A}$ where $n >d$ and $\mat{A}$ is of rank $d$.  Furthermore, assume we take a Gaussian sketch of size $k$. Consider the limit in $n, k$ and $d$, such that $d/k \to \alpha$ with $\alpha \in (0, 1]$. Set
\begin{align*}
\mu_{k,d} &= (\sqrt{k-1/2}-\sqrt{d-1/2})^{2},\\
\sigma_{k,d} &= (\sqrt{k-1/2}-\sqrt{d-1/2})\left(\dfrac{1}{\sqrt{k-1/2}}-\dfrac{1}{\sqrt{d-1/2}}\right)^{1/3},
\end{align*}
and define the following centering and scaling constants $ \tau_{k,d} = \sigma_{k,d}/\mu_{k,d},  \nu_{k,d} = \log(\mu_{k,d})-\log k -\tau_{k,d}^2/8$. Set $Z \sim F_{1}$, where $F_{1}$ is the Tracy-Widom distribution. Let $\gamma_{n,k,d}$ give the exact convergence probability, and $\widehat{\gamma}_{n,k,d}$ give the asymptotic approximation to the convergence probability:
\begin{align*}
    \gamma_{n,k,d} &= \Pr \left(\underset{t \to \infty}{\lim}\lVert\vect{\beta}_{F} - \vect{\beta}^{(t)} \rVert_{2} = 0\right), \quad
    \widehat{\gamma}_{n,k,d} =  \Pr\left(Z \le \dfrac{\nu_{k,d}-\log (1/2)}{\tau_{k,d}}\right).
\end{align*}
Then for all starting values $\vect{\beta}^{(0)}$, asymptotically in $n, d$ and $k$, 
\begin{align*}
   \quad \underset{n,d,k \to \infty}{\lim} \left\lvert \gamma_{n,k,d}-\widehat{\gamma}_{n,k,d} \right\rvert = 0.
\end{align*}
Furthermore, for even $d$, $ \left\lvert \gamma_{n,k,d}-\widehat{\gamma}_{n,k,d} \right\rvert = {O}(d^{-2/3})$.
\end{theorem}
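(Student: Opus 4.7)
The plan is to mirror the strategy of Theorem \ref{thm:gaussian_embedding_tw_limit}, replacing the largest-eigenvalue Tracy-Widom approximation used there with a Box-Cox-transformed approximation for the smallest eigenvalue of a standard Wishart matrix.

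My starting point is the pivotal representation \eqref{eq:conv_prob}, $\gamma_{n,k,d} = \Pr(\lambda_{\min}(\mat{W}) > 1/2)$ with $\mat{W} \sim \text{Wishart}(k, \mat{I}_{d}/k)$. Rescaling via $\widetilde{\mat{W}} = k\mat{W} \sim \text{Wishart}(k, \mat{I}_d)$ and taking logs converts the event to $\log\lambda_{\min}(\widetilde{\mat{W}}) > \log k + \log(1/2)$. Because the probability depends on neither $n$ nor on the source matrix, the problem reduces to an asymptotic analysis of the Wishart spectrum in the regime $d/k \to \alpha \in (0,1]$.

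Next I would invoke the log-transformed Tracy-Widom limit for the smallest eigenvalue at the soft edge. The unnormalised convergence $(\lambda_{\min}(\widetilde{\mat{W}}) - \mu_{k,d})/\sigma_{k,d} \to F_1$ with soft-edge centering $\mu_{k,d} = (\sqrt{k-1/2}-\sqrt{d-1/2})^2$ follows from \citet{johnstone_distribution_2001}. As in \citet{ma_accuracy_2012} for the largest eigenvalue, applying $\log(\cdot)$ to the limit picks up a quadratic bias of order $\tau_{k,d}^2$, and this is precisely cancelled by the shift $-\tau_{k,d}^2/8$ built into $\nu_{k,d}$, yielding $(\log\lambda_{\min}(\widetilde{\mat{W}}) - \log\mu_{k,d} + \tau_{k,d}^2/8)/\tau_{k,d} \to F_1$ at rate $O(d^{-2/3})$ for even $d$. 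Substituting this into the rewritten event and rearranging for the standardised variable — with attention to the sign of $\tau_{k,d}$, since dividing by it flips the inequality — gives $Z \le (\nu_{k,d} - \log(1/2))/\tau_{k,d}$, which is exactly $\widehat{\gamma}_{n,k,d}$. The $O(d^{-2/3})$ rate for even $d$ is inherited directly from the underlying Tracy-Widom approximation.

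The main technical obstacle is justifying the $-\tau_{k,d}^2/8$ correction at the lower soft edge. \citet{ma_accuracy_2012} derives the corresponding term for the largest eigenvalue via a saddle-point expansion of the Laguerre orthogonal ensemble, and the analogous expansion is needed on the other end of the spectrum. One must verify that the leading quadratic term in the Taylor expansion of $\log(\mu_{k,d} + \sigma_{k,d} T)$ cancels at the appropriate order so that the $O(d^{-2/3})$ rate survives the nonlinear transformation. Once that is in hand, the remaining steps — rescaling, sign handling, and algebraic rearrangement — are routine and follow the template of Theorem \ref{thm:gaussian_embedding_tw_limit}.
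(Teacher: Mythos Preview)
Your route is essentially the paper's: start from the pivotal identity $\gamma_{n,k,d}=\Pr(\lambda_{\min}(\mat{W})>1/2)$ with $\mat{W}\sim\text{Wishart}(k,\mat{I}_d/k)$, standardize $\log\lambda_{\min}$ using the constants $\nu_{k,d},\tau_{k,d}$, invoke the Tracy--Widom limit for the smallest eigenvalue on the log scale, bound the difference uniformly in the threshold, and read off the $O(d^{-2/3})$ rate from \citet{ma_accuracy_2012}.

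The one substantive mismatch is what you flag as the ``main technical obstacle.'' You propose to \emph{derive} the $-\tau_{k,d}^{2}/8$ correction at the lower soft edge by redoing the saddle-point expansion, but the paper simply quotes this as a finished result: Theorem~2 of \citet{ma_accuracy_2012} already establishes that $(\log\lambda_{\min}-\nu_{k,d})/\tau_{k,d}$ converges to a Tracy--Widom law with the $O(d^{-2/3})$ rate for even $d$, with exactly these centering and scaling constants. So your obstacle dissolves into a citation, and the remaining work is the routine rearrangement you describe. Two small corrections: \citet{johnstone_distribution_2001} treats the \emph{largest} eigenvalue, not the smallest, so it is not the right reference for the lower soft edge; and be careful that the paper's statement of the limit (its Theorem~S.5) has the standardized log-minimum converging to $-Z$ rather than $Z$, which interacts with your sign-flip argument when dividing by $\tau_{k,d}$---make sure those two sign effects combine to give $(\nu_{k,d}-\log(1/2))/\tau_{k,d}$ and not its negative.
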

The proof is given in the supplementary material.

The  embedding probability for the Gaussian sketch can be estimated by simulating $\mat{W} \sim \text{Wishart}(k, \mat{I}_{d}/k)$ and using the empirical distribution  of the random variable $\sigma_{\text{max}}\left(\mat{I}_{d} - \mat{W}\right)$. To assess the accuracy of the approximation in Theorem \ref{thm:gaussian_embedding_tw_limit}, we generated $B=10,000$ random Wishart matrices $\mat{W}^{[1]}, \ldots, \mat{W}^{[B]}$. For each simulated matrix $\mat{W}^{[b]}$ we computed the distortion factor $\epsilon^{[b]} = \sigma_{\text{max}}(\mat{I}_{d}-\mat{W}^{[b]})$ for $b=1, \ldots, B$. The simulated distortion factors $\epsilon^{[1]}, \ldots, \epsilon^{[B]}$ were used to give a Monte Carlo estimate of the embedding probability:
\begin{align}
\widehat{\Pr}(\mat{S} \text{ is an } \epsilon\text{-subspace embedding for } \mat{A}) &=  \dfrac{1}{B}\sum_{b=1}^{B}\mathbbm{1}(\epsilon^{[b]} \le \epsilon). \label{eq:monte_carlo_embedding}
\end{align}

We used the \texttt{ARPACK} library \citep{lehoucq_arpack_1998} to compute the maximum singular values $\sigma_{\text{max}}(\mat{I}_{d}-\mat{W}^{[b]})$. The estimated embedding probabilities are displayed in Figure \ref{fig:epsilon_ecdf} for different dimensions $d$. The sketch size to variables ratio, $k/d$, was held fixed at 20. The solid red line shows the empirical probability of obtaining an $\epsilon$-subspace embedding. The dashed black line gives the Tracy-Widom approximation given in Theorem \ref{thm:gaussian_embedding_tw_limit}. The agreement is consistently good over dimensions $d$, and the range of sketch sizes $k$ that were considered. 
\begin{figure*}
    \centering
    \includegraphics[width=0.9\textwidth]{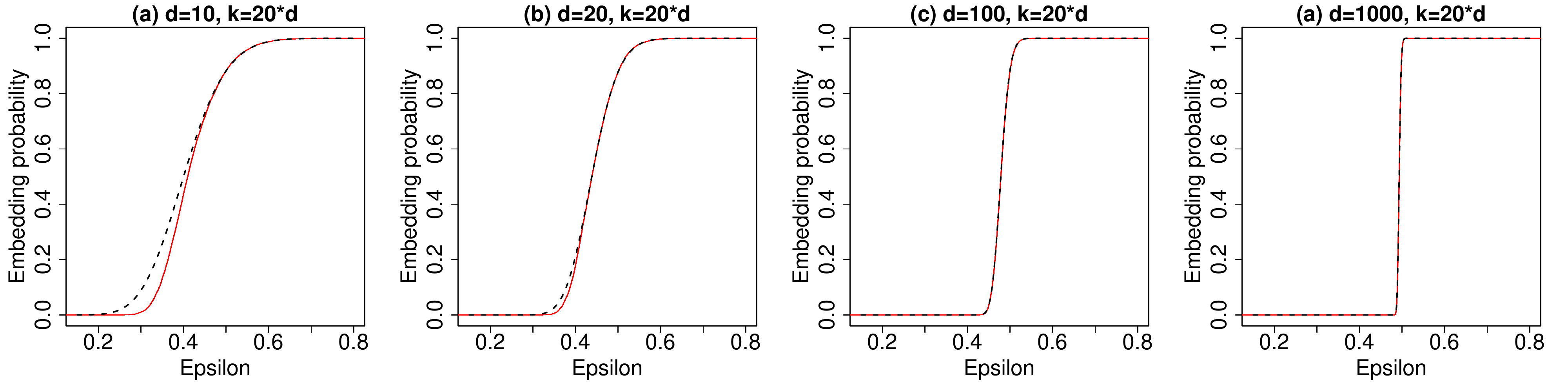}
    \caption{Accuracy of Tracy-Widom approximation for embedding probability \eqref{eq:wishart_embedding_eigenvalues} for the Gaussian sketch. The dashed black line gives the asymptotic limit, the solid red line gives the empirical probability. When $d\ge 20$ the approximation given in Theorem \ref{thm:gaussian_embedding_tw_limit} is very accurate. }
    \label{fig:epsilon_ecdf}
\end{figure*}

\section{Computationally efficient sketches}
\label{sec:data_oblivious}
\subsection{Asymptotics}
Asymptotic methods are useful to analyse data-oblivious sketches that do not admit interpretable finite sample distributions \citep{li_very_2006, ahfock_statistical_2020, lacotte_2020_limiting}. Here we describe the limiting behavior of the sketched algorithms for fixed $k$ and $d$ as the number of source observations $n$ increases. 

Under an assumption on the limiting leverage scores of the source data matrix, we can establish a limit theorem for the Hadmard and Clarkson-Woodruff sketches.  The leverage scores are an important structural property in sketching algorithms \citep{mahoney_structural_2016}. 

\begin{assumption}
\label{assum:leverage}
Define the singular value decomposition of the $n \times d$ source dataset as $\mat{A}_{(n)}=\mat{U}_{(n)}\mat{D}_{(n)}\mat{V}_{(n)}^{\T}$. Let $\vect{u}_{(n)i}^{\T}$ give the $i$th row in $\mat{U}_{(n)}$. Assume that the maximum leverage score tends to zero, that is
	\begin{align*}
	\lim_{n \to \infty} \underset{i=1, \ldots, n}{\textnormal{max}} \lVert \vect{u}_{(n)i} \rVert_{2}^{2} = 0. 
	\end{align*}
\end{assumption}

The asymptotic probability of obtaining an $\epsilon$-subspace embedding for the Hadamard and Clarkson-Woodruff sketches can be related to the Wishart distribution. 
\begin{theorem}
\label{thm:data_oblivious_asymptotic_embedding}
Consider a sequence of arbitrary $n \times d$ data matrices $\mat{A}_{(n)}$, where each data matrix is of rank $d$, and $d$ is fixed. Let $\mat{A}_{(n)}=\mat{U}_{(n)}\mat{D}_{(n)}\mat{V}_{(n)}^{\T}$ represent the singular value decomposition of $\mat{A}_{(n)}$.  Let $\mat{S}_{(n)}$ be a $k \times n$ Hadamard or Clarkson-Woodruff sketching matrix where $k$ is also fixed. Suppose that Assumption \ref{assum:leverage} is satisfied. Then as $n$ tends to infinity with $k$ and $d$ fixed, 
\begin{align*}
\underset{n \to \infty}{\lim}\Pr\left( \mat{S}_{(n)} \emph{ is an $\epsilon$-subspace embedding for $\mat{A}_{(n)}$} \right) &= \Pr\left(\sigma_{\emph{max}}(\mat{I}_{d}-\mat{W}) \le \epsilon \right),
\end{align*}
where $\mat{W} \sim \emph{Wishart}(k, \mat{I}_{d}/k)$.
\end{theorem}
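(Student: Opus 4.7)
The plan is to reduce the embedding probability to the limiting distribution of the $d \times d$ Gram matrix $\mat{M}_n := \mat{U}_{(n)}^{\T}\mat{S}_{(n)}^{\T}\mat{S}_{(n)}\mat{U}_{(n)}$ and then apply the continuous mapping theorem. The identity \eqref{eq:embedding_probability} gives
$$\Pr\bigl(\mat{S}_{(n)}\text{ is an $\epsilon$-subspace embedding for }\mat{A}_{(n)}\bigr) = \Pr\bigl(\sigma_{\text{max}}(\mat{I}_d - \mat{M}_n) \le \epsilon\bigr),$$
so the theorem follows if I can show $\mat{M}_n \xrightarrow{d} \mat{W}\sim \text{Wishart}(k,\mat{I}_d/k)$, since the map $\mat{M}\mapsto \sigma_{\text{max}}(\mat{I}_d-\mat{M})$ is continuous and $\mat{W}$ is absolutely continuous so the boundary event $\{\sigma_{\text{max}}(\mat{I}_d-\mat{W})=\epsilon\}$ carries zero mass.

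To obtain the Wishart limit, I would establish joint asymptotic normality of the $k$ scaled rows $\vect{v}_r$ of $\sqrt{k}\,\mat{S}_{(n)}\mat{U}_{(n)}$, showing each converges to an independent $N_d(\vect{0},\mat{I}_d)$ vector; then $\mat{M}_n = k^{-1}\sum_{r=1}^k \vect{v}_r \vect{v}_r^{\T}$ and continuous mapping will yield the Wishart. For the Clarkson-Woodruff sketch, the uniform hash $h\colon\{1,\ldots,n\}\to\{1,\ldots,k\}$ and independent Rademachers $\epsilon_i$ give
$$\text{vec}\bigl(\sqrt{k}\,\mat{S}_{(n)}\mat{U}_{(n)}\bigr) = \sum_{i=1}^{n} \epsilon_i\,\sqrt{k}\,\vect{e}_{h(i)}\otimes \vect{u}_{(n)i},$$
an independent sum of mean-zero $kd$-vectors whose total covariance is $\mat{I}_{kd}$ (using $\sum_i \vect{u}_{(n)i}\vect{u}_{(n)i}^{\T}=\mat{I}_d$) and whose individual norms are $\sqrt{k}\lVert\vect{u}_{(n)i}\rVert_2$; Assumption \ref{assum:leverage} delivers the Lindeberg condition and the multivariate CLT produces the joint Gaussian limit. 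For the Hadamard sketch, write $\vect{v}_r = \sum_i h_{I_r,i}\epsilon_i \vect{u}_{(n)i}$ with $I_1,\ldots,I_k$ i.i.d.\ uniform on $\{1,\ldots,n\}$. Conditional on $I_1,\ldots,I_k$, the stacked vector $(\vect{v}_1,\ldots,\vect{v}_k)$ is an independent sum over $i$ in the Rademacher variables with summand norms $\sqrt{k}\lVert \vect{u}_{(n)i}\rVert_2\to 0$, so a conditional Lindeberg-Feller CLT applies.

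The main obstacle is establishing asymptotic independence of the $\vect{v}_r$ in the Hadamard case, where the shared diagonal $\mat{D}$ couples the rows. The decisive step is to show that for $r\ne r'$ the conditional cross-covariance block $\mat{\Sigma}_{r,r'}^{(n)} := \sum_i h_{I_r,i}h_{I_{r'},i}\vect{u}_{(n)i}\vect{u}_{(n)i}^{\T}$ tends to zero in probability. Using Hadamard column orthogonality, $\mathbb{E}[h_{I,i}h_{I,i'}]=\delta_{ii'}$ for $I$ uniform on $\{1,\ldots,n\}$, so
$$\mathbb{E}\bigl[\lVert \mat{\Sigma}_{r,r'}^{(n)}\rVert_F^2\bigr] = \sum_{i=1}^{n} \lVert \vect{u}_{(n)i}\rVert_2^4 \le d\cdot \underset{i=1,\ldots,n}{\text{max}}\,\lVert \vect{u}_{(n)i}\rVert_2^2 \to 0$$
by Assumption \ref{assum:leverage} (together with the identity $\sum_i \lVert\vect{u}_{(n)i}\rVert_2^2=d$). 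Combined with the conditional CLT, this forces the joint limit of $(\vect{v}_1,\ldots,\vect{v}_k)$ to be $N_{kd}(\vect{0},\mat{I}_{kd})$, hence $\mat{M}_n\xrightarrow{d}\text{Wishart}(k,\mat{I}_d/k)$, and continuous mapping via \eqref{eq:embedding_probability} completes the proof.
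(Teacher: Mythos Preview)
Your proposal is correct and follows the same skeleton as the paper's proof: establish that $\mat{S}_{(n)}\mat{U}_{(n)}\xrightarrow{d}\text{MN}(\mat{0},\mat{I}_k,\mat{I}_d/k)$, apply the continuous mapping theorem twice to obtain $\sigma_{\text{max}}(\mat{I}_d-\mat{M}_n)\xrightarrow{d}\sigma_{\text{max}}(\mat{I}_d-\mat{W})$, and finish with the Portmanteau lemma. The only difference is one of packaging: the paper imports the matrix-normal limit as a black box (Theorem~S.4, from \citet{ahfock_statistical_2020}), whereas you re-derive it via a Lindeberg--Feller argument on $\text{vec}(\sqrt{k}\,\mat{S}_{(n)}\mat{U}_{(n)})$, handling the Hadamard coupling by showing the conditional cross-covariance blocks vanish in $L^2$. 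Your route is more self-contained; the paper's is shorter because the CLT has been proved elsewhere.
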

The proof is given in the supplementary material. 

Theorem \ref{thm:data_oblivious_asymptotic_embedding} states the the embedding probability for the Hadamard and Clarkson-Woodruff sketches converges to that of the Gaussian sketch as $n \to \infty$. Therefore, Theorem \ref{thm:gaussian_embedding_tw_limit} can also be used to approximate the embedding probability. Empirical studies have shown that the Hadamard and Clarkson-Woodruff sketches can give similar quality results to the Gaussian projection \citep{venkata_johnson_2011,le_2013_fastfood, dahiya_2018_empirical}. Theorem \ref{thm:data_oblivious_asymptotic_embedding} helps to characterize situations where this phenomenon is expected to be observed.
\begin{remark}
The same line of proof used in Theorem \ref{thm:data_oblivious_asymptotic_embedding} can be used to show that the convergence probability of \eqref{eq:basic_iteration} using the Hadamard and Clarkson-Woodruff projections converges to that of the Gaussian sketch under Assumption \ref{assum:leverage}. Theorem \ref{thm:gaussian_convergence_tw} also gives an asymptotic approximation for the Hadamard and Clarkson-Woodruff sketches. 
\end{remark}

It remains to establish a formal limit theorem in terms of the Tracy-Widom distribution for the Hadamard and Clarkson-Woodruff sketches. The proof of Theorem \ref{thm:data_oblivious_asymptotic_embedding} treats $k$ and $d$ as fixed, with only $n$ being taken to infinity. It is possible that Assumption \ref{assum:leverage} on the leverage scores will remain sufficient in the expanding dimension scenario. For any $d$, the maximum leverage score must be greater than the average leverage score, 
\begin{align*}
    \underset{i=1,\ldots, n}{\text{max}} \lVert \vect{u}_{(n)i} \rVert_{2}^{2} &\ge \dfrac{1}{n}\sum_{i=1}^{n} \lVert \vect{u}_{(n)i} \rVert_{2}^{2} = \dfrac{d}{n}.
\end{align*}
If we maintain that Assumption \ref{assum:leverage} holds on the leverage scores as $n,d,k \to \infty$, this implies that $d/n \to 0$. As we have assumed that our primary motivation for sketching is data compression when $n \gg d$, we feel that analysis in the asymptotic regime $d/n \to 0$ is reasonable for this use-case setting. The asymptotic approximations developed here are recommended for applications of sketching in tall-data problems $n \gg d$. 

The key result is that the Hadamard and Clarkson-Woodruff sketches behave like the Gaussian projection for large $n$, with $k$ and $d$ fixed. If the Tracy-Widom approximation in Theorem \ref{thm:gaussian_embedding_tw_limit} is good for finite $k$ and $d$ with the Gaussian sketch, it should hold well for the Hadamard and Clarkson-Woodruff projections for $n$ sufficiently large.

\subsection{Uniform sketch}
It is considerably more difficult to approximate the embedding probability for the uniform sketch compared to the other data-oblivious projections. \citet{vershynin_2010_introduction} provides a bound for the uniform sketch that is useful for comparative purposes.
\begin{theorem}[\cite{vershynin_2010_introduction}, Theorem 5.1]
\label{thm:subsampling_finite}
Consider an $n \times d$ matrix $\mat{U}$ such that $\mat{U}^{\T}\mat{U}=\mat{I}_{d}$. Let  $\vect{u}_{i}^{\T}$ represent the $i$-th row in $\mat{U}$ for $i=1, \ldots, n$. Let $m$ give an upper bound on the leverage scores, so
\begin{align*}
    \underset{i=1, \ldots, n}{\max} \ \lVert \vect{u}_{i} \rVert_{2}^{2} \le m.
\end{align*}
Let ${\mat{S}}$ be a $k \times d$ uniform sketch of size $k$. Then for every $t \ge 0$, with probability at least $1-2d\exp(-ct^2)$ one has
\begin{align*}
   1- t\sqrt{\dfrac{mn}{k}} \le \sigma_{\emph{min}}(\mat{S}\mat{U}) \le \sigma_{\emph{max}}(\mat{S}\mat{U}) \le 1+t\sqrt{\dfrac{mn}{k}}.
\end{align*}
\end{theorem}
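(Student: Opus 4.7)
The plan is to show that $(\mat{S}\mat{U})^{\T}(\mat{S}\mat{U})$ concentrates around $\mat{I}_d$ with high probability using a matrix concentration inequality, and then convert this operator-norm bound on the Gram matrix into the claimed two-sided bound on the singular values of $\mat{S}\mat{U}$ itself. First I would unpack the definition of the uniform sketch. Since $\mat{S} = \sqrt{n/k}\,\Phi$ where $\Phi$ selects $k$ independent uniformly random rows with replacement, the matrix $\mat{S}\mat{U}$ has $r$-th row $\sqrt{n/k}\,\vect{u}_{i_r}^{\T}$ with $i_r$ uniform on $\{1,\dots,n\}$. Consequently
\begin{align*}
(\mat{S}\mat{U})^{\T}(\mat{S}\mat{U}) = \frac{1}{k}\sum_{r=1}^{k} \mat{Y}_r,\qquad \mat{Y}_r = n\,\vect{u}_{i_r}\vect{u}_{i_r}^{\T}.
\end{align*}
The matrices $\mat{Y}_1,\dots,\mat{Y}_k$ are i.i.d., and because $\mat{U}^{\T}\mat{U}=\mat{I}_d$ one checks $\mathbb{E}[\mat{Y}_r] = \sum_{i}\vect{u}_i\vect{u}_i^{\T} = \mat{I}_d$, so the Gram matrix is an unbiased estimator of the identity.

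Next I would apply the matrix Bernstein inequality to the centred summands $\mat{X}_r = (\mat{Y}_r-\mat{I}_d)/k$. The uniform leverage score hypothesis gives $\lVert \mat{Y}_r\rVert = n\lVert\vect{u}_{i_r}\rVert_2^2 \le nm$, hence $\lVert \mat{X}_r\rVert \lesssim nm/k$. For the matrix variance proxy the key calculation is
\begin{align*}
\mathbb{E}[\mat{Y}_r^2] = n\sum_{i=1}^{n}\lVert\vect{u}_i\rVert_2^2\,\vect{u}_i\vect{u}_i^{\T} \preceq nm\sum_{i=1}^{n}\vect{u}_i\vect{u}_i^{\T} = nm\,\mat{I}_d,
\end{align*}
which yields $\bigl\lVert\sum_r \mathbb{E}[\mat{X}_r^2]\bigr\rVert \le nm/k$. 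Matrix Bernstein then gives
\begin{align*}
\Pr\bigl(\lVert(\mat{S}\mat{U})^{\T}(\mat{S}\mat{U})-\mat{I}_d\rVert \ge s\bigr) \le 2d\exp\!\left(-\frac{c'ks^2}{nm + nms/3}\right),
\end{align*}
and, in the regime $s \le 1$ relevant here, the subgaussian branch dominates; setting $s = t\sqrt{mn/k}$ collapses this to $2d\exp(-ct^2)$ for an absolute constant $c>0$.

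Finally I would translate the bound on $\lVert(\mat{S}\mat{U})^{\T}(\mat{S}\mat{U})-\mat{I}_d\rVert$ into the claimed singular-value inequalities. On the event above, every eigenvalue of $(\mat{S}\mat{U})^{\T}(\mat{S}\mat{U})$ lies in $[1-s,1+s]$, so taking square roots and using $\sqrt{1\pm s}\in[1-s,1+s]$ for $s\in[0,1]$ produces $1-t\sqrt{mn/k} \le \sigma_{\text{min}}(\mat{S}\mat{U}) \le \sigma_{\text{max}}(\mat{S}\mat{U}) \le 1+t\sqrt{mn/k}$. I expect the main obstacle to be pinning down the matrix variance term: a naive estimate of the form $\lVert \mathbb{E}[\mat{Y}_r^2]\rVert \le \lVert\mat{Y}_r\rVert^2 \le (nm)^2$ would inflate the rate by an additional factor of $\sqrt{nm}$, so it is essential to exploit $\sum_i\vect{u}_i\vect{u}_i^{\T} = \mat{I}_d$ jointly with the leverage bound $\lVert\vect{u}_i\rVert_2^2 \le m$ to secure the sharp $\sqrt{mn/k}$ rate that the statement asserts.
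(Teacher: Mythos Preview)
The paper does not supply a proof of this statement: it is quoted verbatim from \cite{vershynin_2010_introduction} and invoked only as a comparison point to highlight that the uniform sketch is more sensitive to the leverage-score profile than the Gaussian, Hadamard, and Clarkson--Woodruff sketches. There is therefore no in-paper argument to benchmark your proposal against.

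That said, your sketch is essentially the standard proof of this result and tracks Vershynin's original argument closely. Writing $(\mat{S}\mat{U})^{\T}(\mat{S}\mat{U})=k^{-1}\sum_{r}\mat{Y}_r$ with $\mat{Y}_r=n\,\vect{u}_{i_r}\vect{u}_{i_r}^{\T}$, computing $\mathbb{E}[\mat{Y}_r]=\mat{I}_d$, and then applying matrix Bernstein with the variance bound $\mathbb{E}[\mat{Y}_r^2]\preceq nm\,\mat{I}_d$ is exactly the right skeleton, and you are correct that the sharp variance estimate (rather than the crude $\lVert\mat{Y}_r\rVert^2$ bound) is what produces the $\sqrt{mn/k}$ rate. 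One technical point worth tightening: your restriction to the subgaussian branch handles only $s=t\sqrt{mn/k}\le 1$. For larger $t$ the Bernstein tail is linear in $s$, which would give $\exp(-ct\sqrt{k/(mn)})$ rather than $\exp(-ct^2)$. The usual fix is to note that $\sigma_{\max}(\mat{S}\mat{U})>1+s$ forces $\lVert(\mat{S}\mat{U})^{\T}(\mat{S}\mat{U})-\mat{I}_d\rVert>2s+s^2\ge s^2$, and applying Bernstein at threshold $s^2$ in the regime $s\ge 1$ recovers the quadratic exponent; alternatively, the deterministic bound $\sigma_{\max}(\mat{S}\mat{U})\le\sqrt{nm}$ trivializes the far tail. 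This is a routine patch, not a gap in the strategy.
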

Theorem \ref{thm:subsampling_finite} can be used to give a lower bound on the probability of obtaining an $\epsilon$-subspace embedding. Both Theorem \ref{thm:subsampling_finite} and Theorem \ref{thm:data_oblivious_asymptotic_embedding} involve the maximum leverage score. Holding $k$ and $d$ fixed, in order for the bound in Theorem \ref{thm:subsampling_finite} to remain controlled as the sample size $n$ increases, the maximum leverage score $m$ must decrease at a sufficient rate. In contrast, Assumption \ref{assum:leverage} does not enforce a rate of decay on the maximum leverage score, only that it eventually tends to zero as $n \to \infty$. This suggests that the uniform projection could be more sensitive to the maximum leverage score than the Gaussian, Hadamard and Clarkson-Woodruff projections. As mentioned earlier, it is very difficult to give a general expression for the embedding probability \eqref{eq:embedding_probability} when using the uniform sketch as it will be a complicated function of the source dataset $\mat{A}$. An advantage of the Gaussian, Hadamard and Clarkson-Woodruff projections is that a Tracy-Widom approximation can be motivated under mild regularity conditions.

\section{Data application}
\subsection{$\epsilon$-subspace embedding}
We tested the theory on a large genetic dataset of European ancestry participants in UK Biobank. The covariate data consists of genotypes at $p=1032$ genetic variants in the Protein Kinase C Epsilon (PKC$\varepsilon$) gene on $n=407,779$ subjects. Variants were filtered to have minor allele frequency of greater than one percent. The response variable was haemoglobin concentration adjusted for age, sex and technical covariates. The region was chosen as many associations with haemoglobin concentration were discovered in a genome-wide scan using univariable models; these associations were with variants with different allele frequencies, suggesting multiple distinct causal variants in the region. We also considered a subset of this dataset with $p=130$ representative markers identified by hierarchical clustering. When including the intercept and response, the PKC$\varepsilon$ subset has $n=407,779, d=132$, and the full PKC$\varepsilon$ dataset has $n=407,779, d=1034$.

The full PKC$\varepsilon$ dataset is of moderate size, so it was feasible to take the singular value decomposition of the full $n \times d$ dataset $\mat{A}=\mat{U}\mat{D}\mat{V}^{\T}$. Given the singular value decomposition we ran an oracle procedure to estimate the exact embedding probability. We generated $B$ sketching matrices $\mat{S}^{[1]}, \ldots, \mat{S}^{[B]}$. These were used to compute  $
\epsilon^{[b]} = \sigma_{\text{max}}(\mat{I}_{d}-\mat{U}^{\T}\mat{S}^{[b]\T}\mat{S}^{[b]}\mat{U})$
for $b=1, \ldots, B$ and give an estimated embedding probability as in \eqref{eq:monte_carlo_embedding}. When working with the full PKC$\varepsilon$ dataset we simulated directly from the matrix normal distribution $\widetilde{\mat{U}} \sim \text{MN}(\mat{I}_{k}, \mat{I}_{d}/k)$ for the Gaussian sketch, rather than computing the matrix multiplication $\mat{S}\mat{U}$. We took $B=1,000$ sketches of the PKC$\varepsilon$ subset, and $B=100$ sketches of the full PKC$\varepsilon$ dataset  using the uniform, Gaussian, Hadamard and Clarkson-Woodruff projections, with $k=20\times d$.

Figure \ref{fig:prkce_medium}  shows the empirical and theoretical embedding probabilities for the  PKC$\varepsilon$ subset $(n=407,779, d=132)$ for each type of sketch. The observed and theoretical curves match well for the Gaussian, Hadamard and Clarkson-Woodruff projection. The uniform projection performs worse than the other data-oblivious random projections,  as larger values of $\epsilon$ indicate weaker approximation bounds. The uniform projection does not satisfy a central limit theorem for fixed $k$, so we do not necessarily expect the Tracy-Widom law to give a good approximation for the uniform projection.  

\begin{figure}
    \centering
    \includegraphics[width=0.9\textwidth]{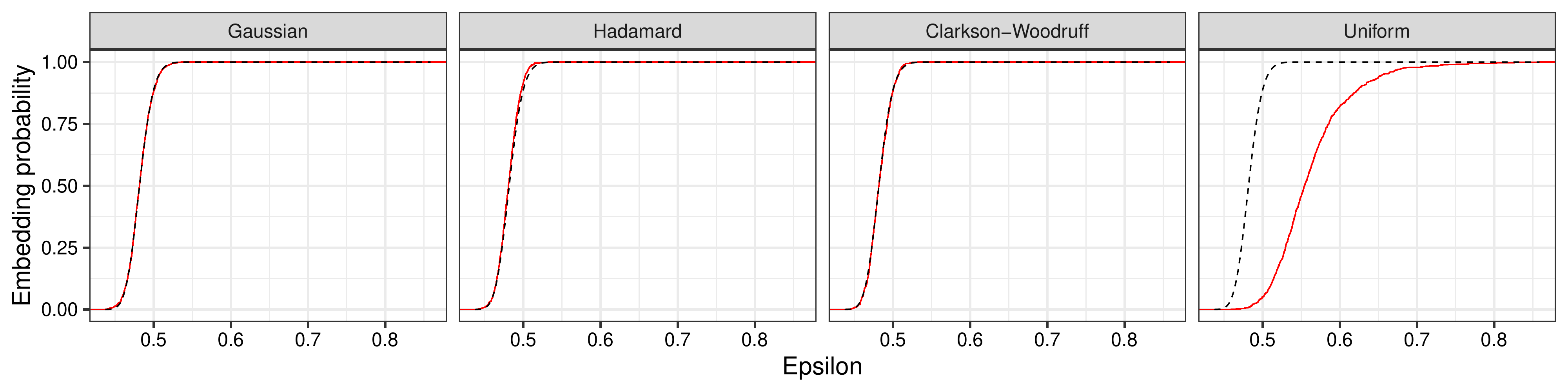}
    \caption{Analysis of subset of PKC$\varepsilon$ dataset $(n=407,779, d=132)$ with $B=1,000$ sketches of size $k=20d$. The dashed black line and the solid red line gives the theoretical and empirical embedding probabilities respectively. The Tracy-Widom approximation is accurate for the Gaussian, Hadamard and Clarkson-Woodruff sketches.}
    \label{fig:prkce_medium}
\end{figure}

Figure \ref{fig:prkce_large}  shows the empirical and theoretical embedding probabilities for the  full PKC$\varepsilon$ dataset $(n=407,779, d=1032)$ for each type of sketch. The Tracy-Widom approximation is accurate for the Gaussian sketch, but there are some deviations for the Hadamard and the Clarkson-Woodruff sketch. Interestingly, the empirical cdf for the Hadamard sketch  (red) is to the left of the theoretical value (black), indicating smaller values of $\epsilon$ than predicted. The distribution of $\epsilon$ has a longer right tail under the Clarkson-Woodruff sketch than is predicted by the Tracy-Widom law.

The deviation from the Tracy-Widom limit in Figure \ref{fig:prkce_large} could be because the finite sample approximation is poor. Theorem \ref{thm:data_oblivious_asymptotic_embedding} suggests that the Hadamard and Clarkson-Woodruff projections behave like the Gaussian sketch for $n$ sufficiently large with respect to $d$. To test this we bootstrapped the full PKC$\varepsilon$ dataset to be ten times its original size. The bootstrapped PKC$\varepsilon$ dataset has $n=4,077,790, d=1034$. We took one thousand sketches of size $k=20\times d$ using the Clarkson-Woodruff projection and ran the oracle procedure of computing $\epsilon^{[b]} = \sigma_{\text{max}}(\mat{I}_{d}-\mat{U}^{\mathsf{T}}\mat{S}^{[b]\mathsf{T}}\mat{S}^{[b]}\mat{U})$ for each sketch. Figure \ref{fig:prkce_bootstrap}  compares the distribution of $\sigma_{\text{max}}(\mat{I}_{d}-\mat{U}^{\mathsf{T}}\mat{S}^{\mathsf{T}}\mat{S}\mat{U})$ using Clarkson-Woodruff projection on the original dataset and on the large bootstrapped dataset. As $n$ increases we expect the quality of the Tracy-Widom approximation to improve. Panel (a) of Figure \ref{fig:prkce_bootstrap} compares the theoretical to the simulation results on the original dataset. The Clarkson-Woodruff projection shows greater variance than expected.  Panel (b) compares the theoretical to the simulation results on the bootstrapped dataset. In (b) there is very good agreement between the empirical distribution and the theoretical distribution. It seems that for this dataset $n \approx 400,000$ is not big enough for the large sample asymptotics to kick in. At $n \approx 4$ million the Tracy-Widom approximation is very good. As mentioned earlier, our motivation for using a sketching algorithm is to perform data compression with tall datasets $n \gg d$. This example highlights that the asymptotic approximations become more accurate as the sample size $n$ grows while the computational incentives for using sketching increase in parallel.

\begin{table}
\centering
\begin{tabular}{@{} l  l l @{}} 
\toprule
Projection & Subset $(p=132)$ & Full $(p=1034)$ \\ 
\midrule 
Gaussian & 769 &  -  \\
Hadamard & 17.2&  156 \\
Clarkson-Woodruff & 1.33 & 21 \\
Uniform & 0.03& 2.8 \\
\bottomrule
\end{tabular}
\caption{Mean sketching time (seconds) over ten sketches for each dataset. The Gaussian sketch is considerably slower than the Hadamard and Clarkson-Woodruff sketches on the subset as is expected from Table \ref{tab:run_time} }
\label{tab:prkce_epsilon_medium_times} 
\end{table}

\begin{figure}
    \centering
    \includegraphics[width=0.9\textwidth]{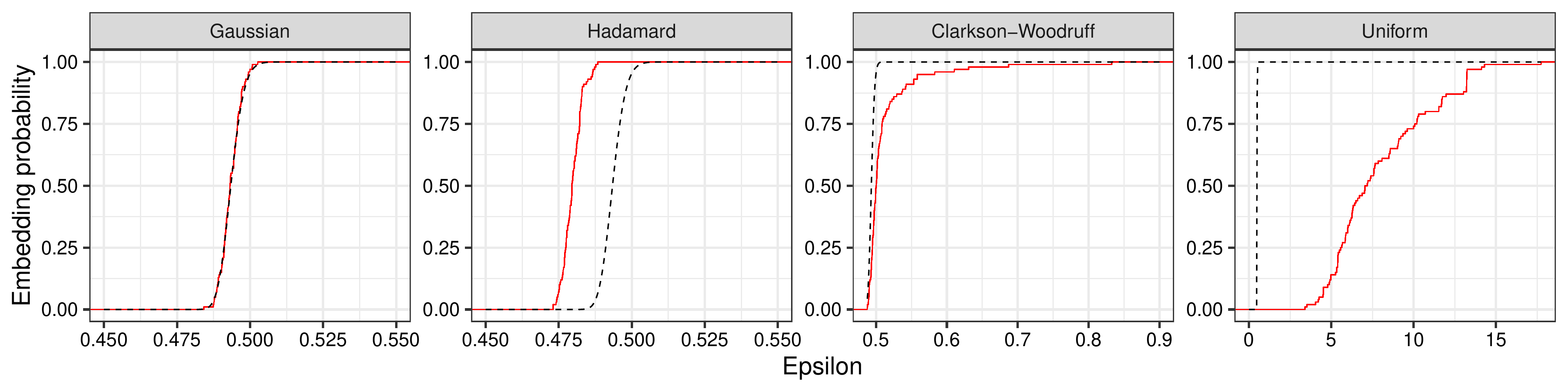}
    \caption{Analysis of full PKC$\varepsilon$ dataset $(n=407,779, d=1,034)$ with $B=100$ sketches of size $k=20d$. The $x$-axis is different in each panel.The dashed black line and the solid red line gives the theoretical and empirical embedding probabilities respectively. The Uniform projection is much less successful at generating $\epsilon$-subspace embeddings than the other data-oblivious projections.}
    \label{fig:prkce_large}
\end{figure}

\begin{figure}
    \centering
    \includegraphics[width=0.6\textwidth]{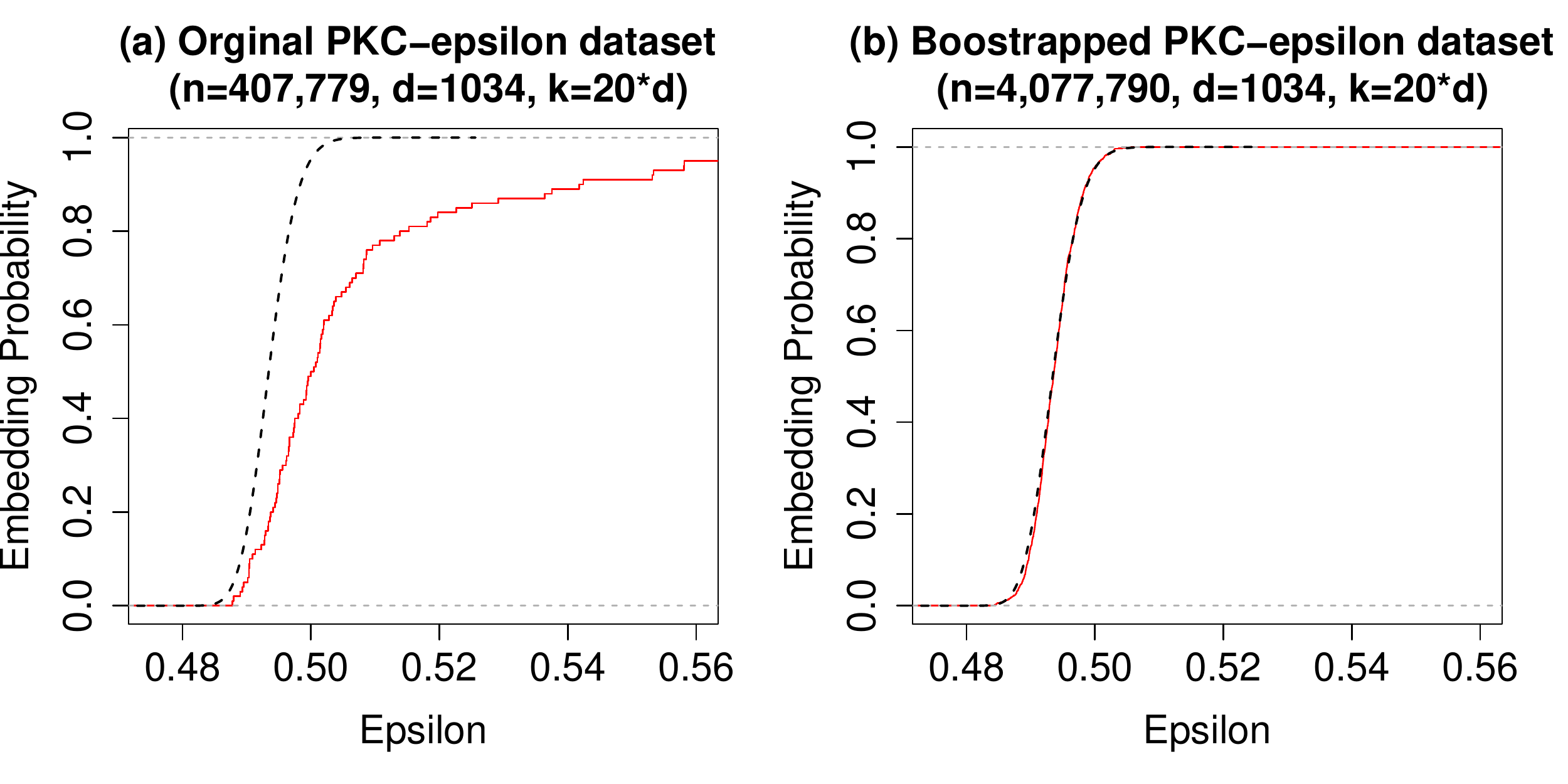}
    \caption{Comparison of results on the original PKC$\varepsilon$ dataset ($n=407,779$) and the bootstrapped larger PKC$\epsilon$ dataset ($n=4,077,790$). The dashed black line and the solid red line gives the theoretical and empirical embedding probabilities respectively. As expected from Theorem \ref{thm:data_oblivious_asymptotic_embedding},  the accuracy of the Tracy-Widom increases with $n$.}
    \label{fig:prkce_bootstrap}
\end{figure}

\subsection{Iterative optimisation}
We considered iterative least-squares optimisation using the song year dataset available from the UCI machine learning repository. The dataset has $n=515,344$ observations, $p=90$ covariates, and year of song release as the response.  We assessed the convergence probability by running the iteration \eqref{eq:basic_iteration} with the sketched preconditioner.  The initial parameter estimate $\vect{\beta}^{(0)}$ was a vector of zeros. The iteration was run for 2000 steps, with convergence being declared if the gradient norm  condition $\lVert \mat{X}^{\T}(\vect{y}-\vect{X}\vect{\beta}^{(t)})\rVert_{2} < 10^{-6}$ was satisfied any time step $t$. This convergence criterion was used instead of $\lVert\vect{\beta}_{F} - \vect{\beta}^{(t)} \rVert_{2}$ as $\vect{\beta}_{F}$ will not be known in practice. This was repeated one hundred times for each of the random projections discussed in Section \ref{subsec:sketching} using different sketch sizes $k$. Figure \ref{fig:year_convergence} compares the empirical (black solid points) and theoretical convergence probabilities (dashed red line) against the sketch size $k$. The point-ranges represent 95\% confidence intervals. The Gaussian, Hadamard and Clarkson-Woodruff show near identical behavior, and the empirical convergence probabilities closely match the theoretical predictions using Theorem \ref{thm:gaussian_convergence_tw}. The uniform sketch was much less successful in generating preconditioners, the algorithm did not show convergence in any replication at each sketch size $k$. In this example, the additional computational cost of the Gaussian, Hadamard and Clarkson-Woodruff sketches compared to the Uniform subsampling has clear benefits.

\begin{figure}
    \centering
\includegraphics[width=0.9\textwidth]{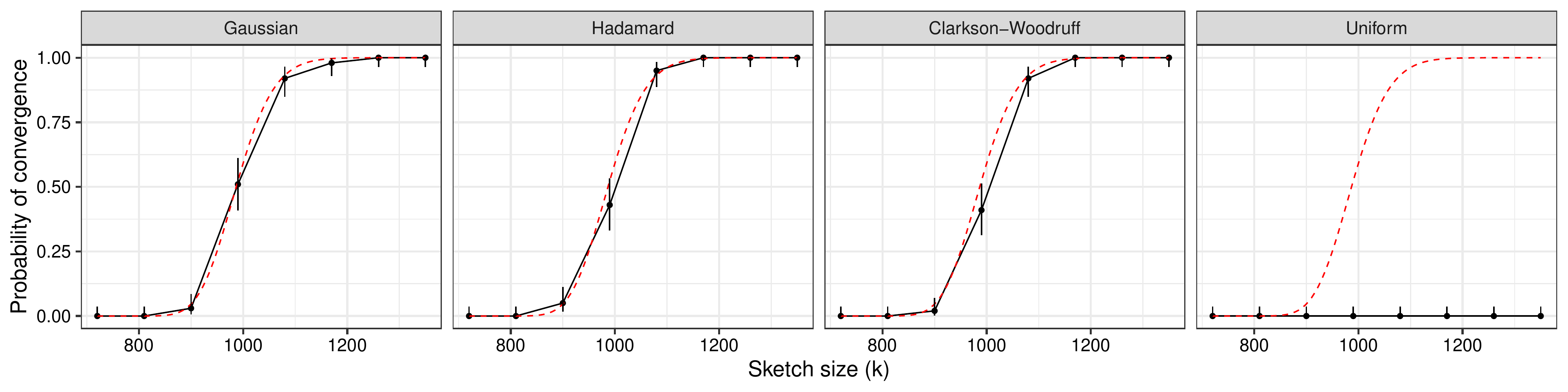}
\caption{Convergence probability on year dataset $(n=515,344, d=91)$. Black solid points show the empirical convergence probability over $B=100$ sketches. The red dashed line gives the theoretical convergence probability using Theorem \ref{thm:gaussian_convergence_tw}. The Tracy-Widom approximation is accurate for the Gaussian, Hadamard and Clarkson-Woodruff sketches. The uniform sketch fails to generate useful preconditioners. }
    \label{fig:year_convergence}
\end{figure}

\section{Conclusion}
The analysis of the asymptotic behavior of common data-oblivious random projections revealed an important connection to the Tracy-Widom law. The probability of attaining an $\epsilon$-subspace embedding (Definition \ref{defn:epsilon_subspace_embedding}) is an integral descriptive measure for many sketching algorithms. The asymptotic embedding probability can approximated using the Tracy-Widom law for the Gaussian, Hadamard and Clarkson-Woodruff sketches. The Tracy-Widom law can also be used to estimate the convergence probability for iterative schemes with a sketched preconditioner. We have tested the predictions empirically  and seen close agreement. The majority of existing results for sketching algorithms have been established using non-asymptotic tools. Asymptotic results are a useful complement that can provide answers to important questions that are difficult to address concretely in a finite dimensional framework. 

There was a stark contrast between the performance of the basic uniform projection and the other data-oblivious projections (Gaussian, Hadamard and Clarkson-Woddruff) in the data application. The Hadmard and Clarkson-Woodruff projections are expected to behave like the Gaussian projection under mild regularity conditions on the maximum leverage score. We observed this phenomenon when $n/d$ was large, as is required by Theorem \ref{thm:data_oblivious_asymptotic_embedding}. The Hadamard and Clarkson-Woodruff projections are substantially more computationally efficient than the Gaussian projection (recall Table \ref{tab:run_time}), so their universal limiting behavior implies that the trade-off between computation time and performance guarantees is asymptotically negligible in the regime \eqref{eq:big_data_regime}. 

The Tracy-Widom law has found many applications in high-dimensional statistics and probability \citep{edelman_2013_random}, and we have shown that it useful for describing the asymptotic behavior of sketching algorithms. The asymptotic behaviour with respect to large $n$ is of practical interest, as this is the regime where sketching is attractive as a data compression technique. The universal behavior of high-dimensional random matrices has practical and theoretical consequences for randomized algorithms that use linear dimension reduction \citep{dobriban_2018_new, lacotte_2020_limiting}. 


\bibliographystyle{spcustom}      
\bibliography{bibliography}   

\makeatletter
\renewcommand \thesection{S\@arabic\c@section}
\renewcommand\thetable{S\@arabic\c@table}
\renewcommand \thefigure{S\@arabic\c@figure}
\renewcommand{\theequation}{S.\arabic{equation}}
\renewcommand{\thefigure}{S\arabic{figure}}
\makeatother

\def\theassumption{S.\arabic{assumption}}
\def\thecorollary{S.\arabic{corollary}}
\def\thelemma{S.\arabic{lemma}}
\def\theproposition{S.\arabic{proposition}}
\def\thetheorem{S.\arabic{theorem}}
\def\thedefinition{S.\arabic{definition}}

\setcounter{section}{1}
\section*{Supplementary Material}
\subsection{Weak convergence}
Our asymptotic arguments concern the convergence of sequences of probability measures. \cite{billingsley_convergence_1999} is an authoritative reference on the topic. We now recap some useful foundational theory, as is presented in \citet[][Chapter 2]{van_der_vaart_asymptotic_1998}. The Portmanteau lemma gives a number of useful equivalent definitions of convergence in distribution (weak convergence). 
\begin{lemma}[Portmanteau]
\label{lem:portmanteau}
Let $(\vect{Z}_{n})_{n \in \mathbb{N}}$ denote a sequence of random vectors of fixed dimension, and $\vect{Z}$ denote another random vector of the same dimension.  The following statements are equivalent, where limits are being taken in $n$:
\begin{enumerate}[(a)]
\item $\Pr(\vect{Z}_{n} \le \vect{z}) \to \Pr(\vect{Z} \le \vect{z})$ at all continuity points $\vect{z}$ of the cumulative distribution function $\Pr(\vect{Z} \le \vect{z})$.
\item \label{prop:boundary} $\Pr(\vect{Z}_{n} \in B) \to \Pr(\vect{Z} \in B)$ for all Borel sets $B$ with $\Pr(\vect{Z} \in \partial B)=0$, where $\partial B$ denotes the boundary of the set $B$. The boundary is defined as the closure of the set $B$ minus the interior of $B$, so $\partial B = \overline{B} \setminus B^{o}$.
\end{enumerate}
\end{lemma}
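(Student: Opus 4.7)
The plan is to prove the two implications (a) $\Rightarrow$ (b) and (b) $\Rightarrow$ (a) separately, starting with the easier direction.

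For (b) $\Rightarrow$ (a), I would fix a continuity point $\vect{z}$ of the multivariate cumulative distribution function $F_{\vect{Z}}(\vect{z}) = \Pr(\vect{Z} \le \vect{z})$ and apply (b) with $B = \prod_{i=1}^{d}(-\infty, z_i]$. The boundary $\partial B$ is the finite union of hyperplane pieces $\{\vect{y} \le \vect{z} : y_i = z_i\}$ for $i = 1,\ldots, d$, and continuity of $F_{\vect{Z}}$ at $\vect{z}$ is equivalent to $\Pr(\vect{Z} \in \partial B) = 0$. Property (b) then yields $\Pr(\vect{Z}_n \le \vect{z}) \to \Pr(\vect{Z} \le \vect{z})$, which is exactly (a).

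For (a) $\Rightarrow$ (b) I would proceed in two stages. In the first stage I would upgrade pointwise CDF convergence at continuity points to the closed-set bound $\limsup_n \Pr(\vect{Z}_n \in F) \le \Pr(\vect{Z} \in F)$ for every closed $F \subseteq \mathbb{R}^d$. Since $\mathbb{R}^d$ is a complete separable metric space, Skorokhod's representation theorem furnishes random vectors $\vect{Z}_n'$ and $\vect{Z}'$ on a common probability space with $\vect{Z}_n' \stackrel{d}{=} \vect{Z}_n$, $\vect{Z}' \stackrel{d}{=} \vect{Z}$, and $\vect{Z}_n' \to \vect{Z}'$ almost surely. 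Upper semicontinuity of $\mathbbm{1}_F$ then gives $\limsup_n \mathbbm{1}_F(\vect{Z}_n') \le \mathbbm{1}_F(\vect{Z}')$ almost surely, and the reverse Fatou lemma delivers $\limsup_n \Pr(\vect{Z}_n \in F) \le \Pr(\vect{Z} \in F)$. Taking complements yields the companion bound $\liminf_n \Pr(\vect{Z}_n \in G) \ge \Pr(\vect{Z} \in G)$ for every open $G$.

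The second stage is the standard sandwich. For a Borel set $B$ with $\Pr(\vect{Z} \in \partial B) = 0$, using $B^o \subseteq B \subseteq \overline{B}$ (with $B^o$ open and $\overline{B}$ closed) I would chain
\begin{align*}
\Pr(\vect{Z} \in B^o) \le \liminf_n \Pr(\vect{Z}_n \in B) \le \limsup_n \Pr(\vect{Z}_n \in B) \le \Pr(\vect{Z} \in \overline{B}).
\end{align*}
Since $\Pr(\vect{Z} \in \overline{B}) - \Pr(\vect{Z} \in B^o) = \Pr(\vect{Z} \in \partial B) = 0$ and both bracketing probabilities equal $\Pr(\vect{Z} \in B)$, the limit exists and equals $\Pr(\vect{Z} \in B)$, giving (b).

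The main obstacle is the first stage, namely moving from CDF convergence at continuity points only to a clean closed-set upper bound valid for every closed $F$. Invoking Skorokhod's theorem is cosmetically clean but hides the real content; an alternative route would approximate $\mathbbm{1}_F$ from above by a decreasing sequence of bounded Lipschitz functions such as $f_m(\vect{y}) = \max(0, 1 - m\, d(\vect{y}, F))$, first proving separately that (a) implies $E[\varphi(\vect{Z}_n)] \to E[\varphi(\vect{Z})]$ for every bounded continuous $\varphi$ via a Helly selection plus subsequence argument. That intermediate step is itself a standard but non-trivial piece of weak convergence theory, which is why the Skorokhod representation is usually the preferred shortcut in $\mathbb{R}^d$.
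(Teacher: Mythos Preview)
Your outline is a correct and standard route to the Portmanteau lemma, but note that the paper does not actually prove this statement: it is quoted as background and the proof is deferred to Chapter~2 of \cite{van_der_vaart_asymptotic_1998}. So there is no ``paper's own proof'' to compare against here.

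On the content of your sketch: the direction (b)~$\Rightarrow$~(a) is fine; the implication you need is that if $\vect{z}$ is a continuity point of $F_{\vect{Z}}$ then $\Pr(\vect{Z}\in\partial B)=0$, and this follows by the contrapositive (if $\Pr(\vect{Z}\in\partial B)>0$, approaching $\vect{z}$ along $\vect{z}-n^{-1}\vect{1}$ exhibits a jump). For (a)~$\Rightarrow$~(b), you already identify the only real issue: Skorokhod's representation theorem is usually stated with weak convergence (integrals of bounded continuous functions) as its hypothesis, so invoking it directly from (a) risks circularity unless you first establish (a)~$\Rightarrow$ convergence of $E[\varphi(\vect{Z}_n)]$ for bounded continuous $\varphi$. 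Your alternative via Lipschitz approximants $f_m(\vect{y})=\max(0,1-m\,d(\vect{y},F))$ and a Helly/tightness argument is exactly the honest path that van der Vaart takes, and once you have the closed-set and open-set bounds the sandwich in your second stage is immediate.
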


\begin{lemma}[Uniform convergence]
\label{lem:uniform}
Suppose that $(\vect{Z}_{n})$ converges in distribution to a random vector $\vect{Z}$ with a continuous distribution function. Then \begin{align*}
  \underset{n \to \infty}{\lim}  \underset{\vect{z}}{\sup}\left\vert \Pr(\vect{Z}_{n} \le \vect{z})  - \Pr(\vect{Z} \le \vect{z}) \right\vert = 0.
\end{align*}
\end{lemma}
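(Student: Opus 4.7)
\textbf{Proof proposal for Lemma \ref{lem:uniform}.}

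The plan is to prove the multivariate analogue of P\'olya's theorem by combining pointwise convergence at the continuity points of the limit $F(\vect{z})=\Pr(\vect{Z}\le \vect{z})$ (which by hypothesis is every point of $\mathbb{R}^{d}$) with the coordinatewise monotonicity of cumulative distribution functions. First I would reduce the sup over all $\vect{z}\in\mathbb{R}^{d}$ to a sup over a large compact rectangle. Since $F$ is continuous and satisfies $\lim_{z_{i}\to -\infty}F(\vect{z})=0$ for each coordinate and $\lim_{\min_i z_{i}\to \infty}F(\vect{z})=1$, for any $\varepsilon>0$ I can choose constants $-M<M$ large enough that $F(\vect{z})<\varepsilon$ whenever some $z_{i}\le -M$ and $F(\vect{z})>1-\varepsilon$ whenever every $z_{i}\ge M$. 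Coordinatewise monotonicity then controls $F(\vect{z})$ outside the cube $[-M,M]^{d}$, and the analogous control for $F_{n}$ will follow once I have uniform convergence inside the cube together with pointwise convergence on its boundary faces.

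Next I would discretise. Uniform continuity of $F$ on the compact cube $[-M,M]^{d}$ lets me pick a finite grid $G\subset[-M,M]^{d}$ such that $|F(\vect{z})-F(\vect{z}')|<\varepsilon$ whenever $\vect{z},\vect{z}'$ lie in a common cell of the grid. For any $\vect{z}\in[-M,M]^{d}$, coordinatewise monotonicity of $F_{n}$ gives
\begin{align*}
F_{n}(\vect{z}^{-}) \le F_{n}(\vect{z}) \le F_{n}(\vect{z}^{+}),
\end{align*}
where $\vect{z}^{-}$ and $\vect{z}^{+}$ are the two grid points of the cell containing $\vect{z}$ with, respectively, the componentwise smallest and largest coordinates. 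Adding and subtracting $F(\vect{z}^{\pm})$ and using $|F(\vect{z})-F(\vect{z}^{\pm})|<\varepsilon$ gives
\begin{align*}
|F_{n}(\vect{z})-F(\vect{z})| \le \max_{\vect{z}^{*}\in G}|F_{n}(\vect{z}^{*})-F(\vect{z}^{*})| + \varepsilon.
\end{align*}
Because $G$ is finite and $F$ is continuous at each grid point, the Portmanteau lemma (part (a)) applied coordinatewise yields $\max_{\vect{z}^{*}\in G}|F_{n}(\vect{z}^{*})-F(\vect{z}^{*})|\to 0$, so the right-hand side is eventually below $2\varepsilon$ uniformly in $\vect{z}\in[-M,M]^{d}$.

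Finally I would stitch together the two regions. Outside $[-M,M]^{d}$, monotonicity plus the tail control at the faces $\{z_{i}=\pm M\}$ (already covered by the grid argument) ensures $|F_{n}(\vect{z})-F(\vect{z})|\le 3\varepsilon$ for all sufficiently large $n$: in a region where some $z_{i}\le -M$ one has $0\le F_{n}(\vect{z})\le F_{n}(\vect{z}\vee(-M,\dots,-M)^{\T})$, which together with the analogous bound on $F$ yields the required estimate, and similarly on the upper tail. Taking $\varepsilon\downarrow 0$ gives the stated uniform convergence. The main obstacle I anticipate is the bookkeeping in this last stitching step: in the multivariate case the ``outside of the cube'' has $3^{d}-1$ orthants and one needs to invoke coordinatewise monotonicity separately in each, which is notationally heavier than the one-dimensional P\'olya argument but conceptually the same.
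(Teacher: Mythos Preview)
The paper does not give its own proof of this lemma: it is quoted as background with the remark ``Proofs for these results are given in Chapter 2 of \cite{van_der_vaart_asymptotic_1998}.'' So there is no paper argument to compare against; your proposal is simply a reconstruction of the standard multivariate P\'olya argument one finds in that reference, and it is correct in outline.

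One remark on the ``stitching'' step you flag as the main obstacle. The cleanest way to avoid the $3^{d}-1$-orthant bookkeeping is not to separate ``inside the cube'' from ``outside'' at all, but to let the grid in each coordinate include the endpoints $\pm\infty$. Concretely, for each $i$ choose $-\infty=t^{(i)}_{0}<t^{(i)}_{1}<\cdots<t^{(i)}_{m_{i}}=+\infty$ with marginal increments $\Pr\bigl(t^{(i)}_{j-1}<Z_{i}\le t^{(i)}_{j}\bigr)<\varepsilon/d$, which is possible because each marginal CDF is continuous. Then every $\vect{z}\in\mathbb{R}^{d}$ lies in some grid cell with corners $\vect{z}^{-}\le\vect{z}\le\vect{z}^{+}$, and a union bound gives $F(\vect{z}^{+})-F(\vect{z}^{-})\le\sum_{i}\Pr\bigl(z^{-}_{i}<Z_{i}\le z^{+}_{i}\bigr)<\varepsilon$. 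Grid points with some coordinates equal to $+\infty$ correspond to lower-dimensional marginal CDFs, and their pointwise convergence follows from weak convergence of the joint law via the continuous mapping theorem; grid points with any coordinate $-\infty$ have $F=F_{n}=0$. The sandwich $F_{n}(\vect{z}^{-})\le F_{n}(\vect{z})\le F_{n}(\vect{z}^{+})$ then yields the uniform bound over all of $\mathbb{R}^{d}$ in one stroke, with no separate tail analysis. As an aside, the paper only ever invokes Lemma~\ref{lem:uniform} for scalar random variables (the rescaled extremal eigenvalues in the proofs of Theorems~\ref{thm:gaussian_embedding_tw_limit} and~\ref{thm:gaussian_convergence_tw}), so the one-dimensional P\'olya theorem would already suffice for its purposes.
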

Proofs for these results are given in Chapter 2 of \cite{van_der_vaart_asymptotic_1998}. 

\subsection{Random Matrix Theory}

\begin{definition}
A random variable $Z$ has a Tracy-Widom distribution $F_{1}$, when the cumulative distribution function is given by
\begin{align*}
F_{1}(z) = \exp \left(-\dfrac{1}{2} \int_{z}^{\infty}q({t}) + ({t}-{z})q^2({t}) \ d{t} \right).
\end{align*}
Where $q({z})$ satisfies the nonlinear differential equation $q''({z}) = {z}q(z) + 2q^3(z)$, subject to the asymptotic boundary condition, $
q(z) \sim \text{Ai}(z) \quad \text{as } z  \to \infty$. The function $\text{Ai}(z)$ denotes the Airy function, defined as
$
\text{Ai}(z) = {\pi}^{-1}\int_{0}^{\infty} \cos \left(t^3/3 + zt\right) \  dt$.
\end{definition}

\begin{theorem}{\citep{ma_accuracy_2012}}
\label{thm:tw_maximum_eigenvalue}
\newline
Consider a sequence of $\emph{Wishart}(k, \mat{I}_{d}/k)$ random matrices where $d,k \to \infty$ and $d/k \to \alpha$ with $\alpha \in (0, 1]$. Let $\lambda_{\emph{max}}$ denote the maximum eigenvalue of the random matrix. Define the centering and scaling constants as
\begin{align*}
\mu_{k,d} &= k^{-1}(\sqrt{k-1/2}+\sqrt{d-1/2})^{2}, \quad \sigma_{k,d} = \dfrac{k^{-1}(\sqrt{k-1/2}+\sqrt{d-1/2})}{\left(1/{\sqrt{k-1/2}}+1/{\sqrt{d-1/2}}\right)^{1/3}}. 
\end{align*}
Then with  $Z \sim F_{1}$ and $F_{1}$ is the Tracy-Widom distribution.
\begin{align*}
\dfrac{(\lambda_{\emph{max}}-\mu_{k,d})}{\sigma_{k,d}} \overset{d}{\to} Z.
\end{align*}

\end{theorem}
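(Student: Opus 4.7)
The plan is to deduce Theorem \ref{thm:tw_maximum_eigenvalue} from the original Tracy-Widom limit of \citet{johnstone_distribution_2001} by showing that the refined centering and scaling constants are asymptotically equivalent to Johnstone's, then closing the argument with Slutsky's lemma. Writing $\mu^{J}_{k,d} = k^{-1}(\sqrt{k}+\sqrt{d})^{2}$ and $\sigma^{J}_{k,d} = k^{-1}(\sqrt{k}+\sqrt{d})(1/\sqrt{k}+1/\sqrt{d})^{1/3}$, Johnstone's theorem states that, in the regime $d/k\to\alpha\in(0,1]$,
\begin{equation*}
\frac{\lambda_{\max} - \mu^{J}_{k,d}}{\sigma^{J}_{k,d}} \;\xrightarrow{d}\; Z \sim F_{1}.
\end{equation*}
So the task reduces to comparing the pair $(\mu_{k,d},\sigma_{k,d})$ in the statement with $(\mu^{J}_{k,d},\sigma^{J}_{k,d})$.

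First I would Taylor expand $\sqrt{k-1/2} = \sqrt{k}\,(1 - 1/(4k) + O(k^{-2}))$ and $\sqrt{d-1/2} = \sqrt{d}\,(1 - 1/(4d) + O(d^{-2}))$. Substituting into the expressions for $\mu_{k,d}$ and $\sigma_{k,d}$ and expanding the square and the cube-root factor yields
\begin{equation*}
\mu_{k,d} = \mu^{J}_{k,d} + O(k^{-1}), \qquad \sigma_{k,d} = \sigma^{J}_{k,d}\bigl(1 + O(k^{-1})\bigr).
\end{equation*}
Next I would record the order of the scale factor itself. In the regime $d = \alpha k + o(k)$ one computes $\sigma^{J}_{k,d} \asymp k^{-2/3}$. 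Consequently
\begin{equation*}
\frac{\mu_{k,d} - \mu^{J}_{k,d}}{\sigma^{J}_{k,d}} = O(k^{-1})\cdot O(k^{2/3}) = O(k^{-1/3}) \longrightarrow 0, \qquad \frac{\sigma^{J}_{k,d}}{\sigma_{k,d}} \longrightarrow 1.
\end{equation*}

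With these two facts in hand, I would rewrite
\begin{equation*}
\frac{\lambda_{\max} - \mu_{k,d}}{\sigma_{k,d}} = \frac{\sigma^{J}_{k,d}}{\sigma_{k,d}}\cdot\frac{\lambda_{\max} - \mu^{J}_{k,d}}{\sigma^{J}_{k,d}} \;-\; \frac{\mu_{k,d} - \mu^{J}_{k,d}}{\sigma^{J}_{k,d}}\cdot\frac{\sigma^{J}_{k,d}}{\sigma_{k,d}},
\end{equation*}
and apply Slutsky's lemma: the first summand is (Tracy-Widom-converging)$\times$(1+o(1)), and the second is deterministic $o(1)$, so the sum converges in distribution to $Z\sim F_{1}$. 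Combined with Lemma \ref{lem:uniform} (uniform convergence, since $F_{1}$ is continuous), this yields the weak convergence claim.

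The only subtle step is verifying the sharp order $\sigma^{J}_{k,d}\asymp k^{-2/3}$ uniformly over the regime $d/k\to\alpha$; this comes from a direct expansion and is routine. The harder assertion in the paper is the $O(d^{-2/3})$ rate used for the error bounds in Theorems \ref{thm:gaussian_embedding_tw_limit}--\ref{thm:gaussian_convergence_tw}, and for that I would cite \cite{ma_accuracy_2012}, whose proof uses the Dumitriu-Edelman tridiagonal model for the Wishart ensemble and a careful matching of the leading Edgeworth-type terms in the expansion of the log-density of $\lambda_{\max}$; this refined argument is what motivates the particular choice of $\sqrt{k-1/2}$ and $\sqrt{d-1/2}$ in place of $\sqrt{k}$ and $\sqrt{d}$, but it is not needed merely for weak convergence.
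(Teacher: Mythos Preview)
The paper does not supply its own proof of this statement: Theorem \ref{thm:tw_maximum_eigenvalue} is quoted in the supplementary material as a result of \cite{ma_accuracy_2012} and is used as an input to the proofs of Theorems \ref{thm:gaussian_embedding_tw_limit} and \ref{thm:gaussian_convergence_tw}, so there is no in-paper argument to compare against.

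Your route via Johnstone's original Tracy-Widom limit plus Slutsky is a correct and standard way to obtain the bare weak-convergence claim. The Taylor expansions give the right orders, the algebraic decomposition of $(\lambda_{\max}-\mu_{k,d})/\sigma_{k,d}$ in terms of Johnstone's statistic is clean, and you rightly separate this from the $O(d^{-2/3})$ rate, which genuinely requires Ma's tridiagonal-model analysis and does not follow from Slutsky alone. One minor point: the displayed $\sigma_{k,d}$ in the statement places the cube-root factor in the denominator, whereas Johnstone's and Ma's scaling has it as a multiplicative factor (as in the companion Theorem \ref{thm:tw_minimum_eigenvalue}); your expansion implicitly uses the multiplicative form, which is the correct one, and with that reading the asymptotic equivalence $\sigma^{J}_{k,d}/\sigma_{k,d}\to 1$ holds exactly as you claim.
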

A limit theorem for the minimum eigenvalue is best expressed in terms of the logarithm of the minimum eigenvalue as this gives higher order accuracy \citep{ma_accuracy_2012}.
\begin{theorem}{\citep{ma_accuracy_2012}}
\label{thm:tw_minimum_eigenvalue}
\newline
Consider a sequence of $\emph{Wishart}(k, \mat{I}_{d}/k)$ random matrices where $d,k \to \infty$ and $d/k \to \alpha$ with $\alpha \in (0, 1]$.  Let $\lambda_{\emph{min}}$ denote the minimum eigenvalue of the random matrix. Set
\begin{align*}
\mu_{k,d} &= (\sqrt{k-1/2}-\sqrt{d-1/2})^{2},\\
\sigma_{k,d} &= (\sqrt{k-1/2}-\sqrt{d-1/2})\left(\dfrac{1}{\sqrt{k-1/2}}-\dfrac{1}{\sqrt{d-1/2}}\right)^{1/3},
\end{align*}
and define the following centering and scaling constants $ \tau_{k,d} = \sigma_{k,d}/\mu_{k,d},  \nu_{k,d} = \log(\mu_{k,d})-\log k -\tau_{k,d}^2/8$. Then where  $Z \sim F_{1}$ and $F_{1}$ is the Tracy-Widom distribution,
\begin{align*}
\dfrac{(\log \lambda_{\emph{min}} -\nu_{k,d})}{\tau_{k,d}} \overset{d}{\to} -Z,
\end{align*}
\end{theorem}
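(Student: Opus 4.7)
My plan is to combine the classical Tracy-Widom limit for the smallest eigenvalue of a Wishart matrix at the soft lower spectral edge with a delta-method transfer to the logarithmic scale, refining the centering so that the leading stochastic and deterministic corrections are absorbed.

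The first step is to establish the natural-scale limit $(\lambda_{\text{min}} - \mu_{k,d})/\sigma_{k,d} \overset{d}{\to} -Z$ using the same edge-scaling machinery that underlies Theorem S.2 for the maximum eigenvalue. Two routes are available: the Dumitriu-Edelman tridiagonal representation of the Laguerre $\beta$-ensemble with $\beta = 1$ combined with the stochastic Airy operator limit, or a Plancherel-Rotach analysis of the real Laguerre weights matched to the Airy kernel at the lower soft edge. The restriction $d/k \to \alpha \in (0,1]$ is important: when $\alpha < 1$ the lower edge sits strictly above zero so that soft-edge (Airy) scaling applies, and the half-integer corrections appearing in $\mu_{k,d}$ and $\sigma_{k,d}$ are precisely Ma's refinements of the Johnstone constants, chosen to kill the leading term in the edge expansion and thereby upgrade the natural-scale rate from $O(d^{-1/3})$ to $O(d^{-2/3})$.

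Next I would transport the limit to the log scale. Since $\mu_{k,d} \sim k(1-\sqrt{\alpha})^2 \to \infty$ and $\tau_{k,d} = \sigma_{k,d}/\mu_{k,d} \sim k^{-1/3} \to 0$, writing $\lambda_{\text{min}} = \mu_{k,d}(1 + \tau_{k,d} U_{k,d})$ with $U_{k,d} = (\lambda_{\text{min}} - \mu_{k,d})/\sigma_{k,d}$ and Taylor-expanding the logarithm gives
\begin{align*}
\frac{\log \lambda_{\text{min}} - \log \mu_{k,d}}{\tau_{k,d}} = U_{k,d} - \frac{\tau_{k,d}}{2} U_{k,d}^2 + O\bigl(\tau_{k,d}^2 |U_{k,d}|^3\bigr).
\end{align*}
Since $U_{k,d}$ is tight (converging to $-Z$) and $\tau_{k,d} \to 0$, Slutsky's theorem shows the quadratic remainder is $o_p(1)$, yielding $(\log \lambda_{\text{min}} - \log \mu_{k,d})/\tau_{k,d} \overset{d}{\to} -Z$. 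The $-\log k$ term inside $\nu_{k,d}$ simply accounts for the rescaling between $\text{Wishart}(k, \mat{I}_d/k)$ and $\text{Wishart}(k, \mat{I}_d)$, which differ by a factor $1/k$ in every eigenvalue.

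The final ingredient, and the main technical obstacle, is the additional $-\tau_{k,d}^2/8$ shift in $\nu_{k,d}$, which is needed to preserve the $O(d^{-2/3})$ convergence rate on the log scale rather than only the $O(d^{-1/3})$ rate one would get from the first-order argument above. This specific constant arises from moment-matching the Taylor expansion against the Tracy-Widom law while simultaneously tracking the next-to-leading term of the Laguerre edge asymptotics; pinning down the coefficient $1/8$ exactly requires the full Plancherel-Rotach analysis of \citet{ma_accuracy_2012} and is the delicate part of the argument. The qualitative statement $(\log \lambda_{\text{min}} - \nu_{k,d})/\tau_{k,d} \overset{d}{\to} -Z$, however, follows robustly from Steps 1--2 together with Lemma \ref{lem:uniform}, which upgrades the convergence in distribution to uniform convergence of the cumulative distribution functions (using continuity of $F_1$).
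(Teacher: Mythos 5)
The first thing to say is that the paper does not prove this statement at all: Theorem \ref{thm:tw_minimum_eigenvalue} is imported verbatim from \citet{ma_accuracy_2012} (their Theorem 2) and used as a black box in the proof of Theorem 2 of the main text. So there is no ``paper's own proof'' to compare against, and any reconstruction you give is necessarily doing more than the paper does.

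As a reconstruction of Ma's argument, your outline is essentially right in shape: establish the natural-scale soft-edge limit $(\lambda_{\text{min}}-\mu_{k,d})/\sigma_{k,d}\overset{d}{\to}-Z$ (noting that the sign flip relative to the largest eigenvalue comes from $\sigma_{k,d}<0$, since $1/\sqrt{k-1/2}-1/\sqrt{d-1/2}<0$ for $k>d$), then transfer to the log scale by the delta method, with the $-\log k$ in $\nu_{k,d}$ absorbing the $\mat{I}_d/k$ normalisation and the $-\tau_{k,d}^{2}/8$ shift being $o(\tau_{k,d})$ and hence irrelevant to the weak limit. Your Taylor expansion and Slutsky step are correct, and you are right that the distributional statement (as opposed to the $O(d^{-2/3})$ rate, which is claimed only in Theorems 1 and 2 of the main text) does not require the $1/8$ coefficient at all. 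Two caveats. First, your proof is not self-contained where it matters most: Step 1 with the \emph{refined} half-integer constants is precisely the content of Ma's theorem, so deferring it to ``the full Plancherel--Rotach analysis'' means you have reproduced the citation rather than the proof --- which is acceptable here only because the paper does the same. Second, your own remark that the soft-edge (Airy) scaling requires the lower edge to sit strictly above zero quietly excludes $\alpha=1$, yet the statement as quoted allows $\alpha\in(0,1]$; at $\alpha=1$ the smallest eigenvalue is governed by hard-edge (Bessel) statistics and the Tracy--Widom limit fails. That is arguably a defect of the statement as transcribed in the paper rather than of your argument, but a complete proof would have to restrict to $\alpha\in(0,1)$ or impose a condition ensuring $k-d\to\infty$ fast enough. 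Finally, the appeal to Lemma \ref{lem:uniform} in your last sentence is not needed for the convergence-in-distribution claim; it only becomes relevant when this theorem is plugged into the proof of Theorem 2.
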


\subsection{Proof of Theorem 1}
\begin{proof} The extreme eigenvalues of a Wishart random matrix converge in probability to fixed values as both the dimension and degrees of freedom expand. The result for the largest eigenvalue is due to \cite{geman_limit_1980} and the result for the smallest eigenvalue is due to \cite{silverstein_smallest_1985}.
\begin{theorem}{\citep{geman_limit_1980, silverstein_smallest_1985}}
\label{thm:pointwise_wishart}
\newline
Consider a sequence of $\emph{Wishart}(k, \mat{I}_{d}/k)$ random matrices where the degrees of freedom $k$ and dimension $d$ are both taken to infinity. Suppose that the variables to {samples} ratio $d/k$ converges to a constant $(d/k) \to \alpha$, where $\alpha \in (0, 1]$. Then the extreme eigenvalues of the random matrix, $\lambda_{\emph{min}}$ and $\lambda_{\emph{max}}$ converge in probability to the limits
\begin{align}
(i) \ \lambda_{\emph{min}} \overset{p}{\to} (1-\sqrt{\alpha})^2, \\
(ii) \ \lambda_{\emph{max}} \overset{p}{\to} (1+\sqrt{\alpha})^2.
\end{align}
\end{theorem}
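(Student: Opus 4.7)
The plan is to reduce to the Gaussian singular-value picture and then split each convergence into an easy inner direction, obtained from weak convergence of the empirical spectral distribution (ESD), and a harder outer direction, which rules out eigenvalues straying outside the limiting support. Concretely, write $\mat{W} = (1/k)\mat{G}^{\T}\mat{G}$, where $\mat{G}$ is a $k \times d$ matrix of i.i.d.\ $N(0,1)$ entries, so the eigenvalues of $\mat{W}$ are squared singular values of $\mat{G}/\sqrt{k}$, and treat $\lambda_{\text{max}}$ and $\lambda_{\text{min}}$ separately.

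For the inner direction I would invoke the Marchenko--Pastur theorem: under the stated scaling the ESD $(1/d)\sum_{i=1}^{d}\delta_{\lambda_{i}(\mat{W})}$ converges weakly in probability to the Marchenko--Pastur (MP) law with shape parameter $\alpha$, which has support $[(1-\sqrt{\alpha})^{2}, (1+\sqrt{\alpha})^{2}]$ and strictly positive density on the interior when $\alpha \in (0,1)$. For any $\delta > 0$ the MP law places positive mass on both $[(1+\sqrt{\alpha})^{2}-\delta, (1+\sqrt{\alpha})^{2}]$ and $[(1-\sqrt{\alpha})^{2}, (1-\sqrt{\alpha})^{2}+\delta]$, and the portmanteau lemma (Lemma~\ref{lem:portmanteau}) then forces $\Pr(\lambda_{\text{max}} \ge (1+\sqrt{\alpha})^{2}-\delta) \to 1$ and $\Pr(\lambda_{\text{min}} \le (1-\sqrt{\alpha})^{2}+\delta) \to 1$; the boundary case $\alpha = 1$ of the minimum-edge claim needs a small separate argument since the MP density blows up at zero.

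The outer direction requires showing that with high probability no eigenvalue lies outside the limiting support. For $\lambda_{\text{max}}$ I would run Geman's moment method: the bound $\lambda_{\text{max}}^{2p} \le \text{Tr}(\mat{W}^{2p})$ reduces the problem to estimating $\mathbb{E}[\text{Tr}(\mat{W}^{2p})]$, which expands as a sum over closed length-$2p$ walks on the bipartite vertex set $\{1,\ldots,k\} \cup \{1,\ldots,d\}$. A combinatorial count of the contributing walks shows the dominant term grows like $d \cdot ((1+\sqrt{\alpha})^{2})^{2p}$ provided $p$ grows slowly enough with $d$ (for instance $p \asymp \log d$), and a Markov bound then gives $\Pr(\lambda_{\text{max}} > (1+\sqrt{\alpha})^{2}+\delta) \to 0$. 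For $\lambda_{\text{min}}$ the moment trick fails, since small eigenvalues contribute negligibly to $\text{Tr}(\mat{W}^{p})$ for positive $p$; here I would follow Silverstein's Stieltjes-transform route, defining $m_{k,d}(z) = (1/d)\text{Tr}((\mat{W} - z\mat{I}_{d})^{-1})$, showing that $\mathbb{E}[m_{k,d}(z)]$ satisfies a perturbed version of the MP self-consistent equation, concentrating $m_{k,d}(z)$ about its mean using Gaussian concentration applied to Lipschitz functions of $\mat{G}$, and concluding from analyticity of the MP Stieltjes transform off its support that the ESD cannot accumulate mass in $[0, (1-\sqrt{\alpha})^{2}-\delta]$.

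The main obstacle is the outer direction, and particularly the lower-edge analysis. Geman's combinatorial bookkeeping is intricate because one must isolate the sharp edge constant $(1+\sqrt{\alpha})^{2}$ rather than a looser bound, and the walk count has to be carried out with enough precision that subleading contributions remain polynomially controlled as $p \to \infty$. Silverstein's lower-edge argument is still more delicate: one must push resolvent control down to $z$ on the real axis just below $(1-\sqrt{\alpha})^{2}$, where the MP density vanishes and the self-consistent equation becomes nearly singular, and show that the limiting equation admits no spurious real solution in this region. In a full write-up I would cite the original Geman and Silverstein papers at these two steps rather than redo the combinatorial or resolvent calculations in detail.
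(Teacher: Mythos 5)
The paper does not prove this statement at all: it is imported verbatim as a classical result with citations to \citet{geman_limit_1980} and \citet{silverstein_smallest_1985}, and is used as a black box inside the proof of Theorem 1. So the comparison here is between your sketch and the original literature rather than anything internal to the paper. Your overall architecture is the standard and correct one: reduce to $\mat{W}=(1/k)\mat{G}^{\T}\mat{G}$ with Gaussian $\mat{G}$, get the inner bounds ($\lambda_{\text{max}}\ge(1+\sqrt{\alpha})^{2}-\delta$, $\lambda_{\text{min}}\le(1-\sqrt{\alpha})^{2}+\delta$ w.h.p.) from weak convergence of the ESD to the Marchenko--Pastur law, and handle the outer bounds separately at each edge. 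Your upper-edge outer bound via $\lambda_{\text{max}}^{2p}\le\mathrm{Tr}(\mat{W}^{2p})$, a walk count giving $d\cdot((1+\sqrt{\alpha})^{2})^{2p}$ for $p\asymp\log d$, and Markov is exactly Geman's method. Two small quibbles on the inner direction: the Portmanteau lemma as stated in the paper is for random vectors, whereas you need the (routine but distinct) statement for weak convergence in probability of the random ESD; and the $\alpha=1$ case of the minimum is actually the \emph{easy} case, not one needing extra care --- the outer bound $\lambda_{\text{min}}\ge 0=(1-\sqrt{1})^{2}$ is automatic, and the density blow-up at $0$ only helps the inner bound.

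The genuine gap is in your lower-edge outer bound. Showing that the ESD ``cannot accumulate mass in $[0,(1-\sqrt{\alpha})^{2}-\delta]$'' does not prove $\lambda_{\text{min}}\ge(1-\sqrt{\alpha})^{2}-\delta$ with high probability: a single stray eigenvalue below the support contributes only mass $1/d\to 0$ to the ESD, so no amount of control on $m_{k,d}(z)$ or on the bulk spectral measure can, by itself, exclude it. Converting bulk (Stieltjes-transform) control into a ``no eigenvalues outside the support'' statement is a genuinely harder theorem (Bai--Silverstein, 1998) requiring uniform resolvent estimates at scale $o(1/d)$, not just convergence of $\mathbb{E}[m_{k,d}(z)]$ to the MP self-consistent equation plus concentration. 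It is also not how \citet{silverstein_smallest_1985} actually proceeds: his argument reduces the Wishart matrix by Householder bidiagonalization to a bidiagonal matrix with independent $\chi$-distributed entries and bounds its smallest singular value directly. Since you say you would cite the original papers at this step, the conclusion you want is still available, but the route you describe for reaching it would not close; as written, the lower-edge argument proves only that the \emph{fraction} of eigenvalues below $(1-\sqrt{\alpha})^{2}-\delta$ vanishes, which is strictly weaker than statement $(i)$.
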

Theorem \ref{thm:pointwise_wishart} and the continuous mapping theorem  can be used to determine the asymptotic embedding probability for the Gaussian sketch. 

\begin{lemma}
\label{lem:gaussian_pointwise_limit}
Suppose we have an arbitrary $n \times d$ data matrix $\mat{A}$ where $n >d$ and $\mat{A}$ is of rank $d$. Assume we take a Gaussian sketch of size $k$. Then asymptotically in $n, k$ and $d$, with $d/k \to \alpha$ where $\alpha \in (0, 1]$, 
\begin{align*}
\underset{n,d,k \to \infty}{\lim} \Pr (\mat{S} \emph{ is an $\epsilon$-subspace embedding for $\mat{A}_{(n)}$}) =  \begin{cases}
0 & \mbox{ \emph{if }} \epsilon < (1+\sqrt{\alpha})^2-1 \\
1 & \mbox{ \emph{if }} \epsilon > (1+\sqrt{\alpha})^2-1
\end{cases}
\end{align*}
\end{lemma}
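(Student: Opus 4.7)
The plan is to convert the embedding event into a condition on the extreme eigenvalues of a Wishart matrix and then invoke the pointwise limits in Theorem~\ref{thm:pointwise_wishart} together with the continuous mapping theorem. Concretely, for the Gaussian sketch the paper has already observed that $\mat{U}^{\T}\mat{S}^{\T}\mat{S}\mat{U}\sim\text{Wishart}(k,\mat{I}_{d}/k)$, so writing $\mat{W}$ for this matrix and combining \eqref{eq:embedding_probability} with \eqref{eq:max_identity} gives
\begin{align*}
\Pr(\mat{S}\text{ is an }\epsilon\text{-subspace embedding for }\mat{A})
=\Pr\bigl(\max(|1-\lambda_{\min}(\mat{W})|,\,|1-\lambda_{\max}(\mat{W})|)\le\epsilon\bigr).
\end{align*}
This reduces the lemma to a statement purely about the Wishart spectrum, with no further dependence on $n$ or on $\mat{A}$.

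Next I would apply Theorem~\ref{thm:pointwise_wishart}: under $d/k\to\alpha\in(0,1]$ the pair $(\lambda_{\min}(\mat{W}),\lambda_{\max}(\mat{W}))$ converges in probability to the constant vector $((1-\sqrt{\alpha})^{2},(1+\sqrt{\alpha})^{2})$. The function $(x,y)\mapsto\max(|1-x|,|1-y|)$ is continuous, so by the continuous mapping theorem
\begin{align*}
\max(|1-\lambda_{\min}(\mat{W})|,|1-\lambda_{\max}(\mat{W})|)
\overset{p}{\to}
\max\bigl(|1-(1-\sqrt{\alpha})^{2}|,\,|1-(1+\sqrt{\alpha})^{2}|\bigr).
\end{align*}
A short calculation gives $|1-(1-\sqrt{\alpha})^{2}|=2\sqrt{\alpha}-\alpha$ (nonnegative for $\alpha\in(0,1]$) and $|1-(1+\sqrt{\alpha})^{2}|=2\sqrt{\alpha}+\alpha$, so the maximum is the larger $2\sqrt{\alpha}+\alpha=(1+\sqrt{\alpha})^{2}-1$, and the upper eigenvalue is the binding constraint.

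Writing $c_{\alpha}=(1+\sqrt{\alpha})^{2}-1$, convergence in probability to the constant $c_{\alpha}$ is equivalent to convergence in distribution to the point mass at $c_{\alpha}$. The cdf of that point mass is continuous at every $\epsilon\ne c_{\alpha}$, so by the Portmanteau lemma (Lemma~\ref{lem:portmanteau}) the probability in question converges to $0$ when $\epsilon<c_{\alpha}$ and to $1$ when $\epsilon>c_{\alpha}$, which is the claim.

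I do not expect a serious obstacle: the only subtlety is bookkeeping, namely verifying that $2\sqrt{\alpha}+\alpha\ge 2\sqrt{\alpha}-\alpha$ so that the maximum simplifies cleanly, and that the invocation of the continuous mapping theorem is legitimate because convergence in probability of each coordinate implies joint convergence in probability to the deterministic limit. Both are routine. Note that the statement leaves the boundary case $\epsilon=c_{\alpha}$ unaddressed, which is consistent with the discontinuity of the limit cdf there; one does not need to say anything about it for the conclusion as stated.
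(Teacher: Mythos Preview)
Your proposal is correct and follows essentially the same route as the paper: reduce the embedding event to the extreme eigenvalues of a $\text{Wishart}(k,\mat{I}_{d}/k)$ matrix, apply the Geman--Silverstein pointwise limits (Theorem~\ref{thm:pointwise_wishart}) together with the continuous mapping theorem to obtain $\sigma_{\text{max}}(\mat{I}_{d}-\mat{W})\overset{p}{\to}(1+\sqrt{\alpha})^{2}-1$, and then invoke the Portmanteau lemma at continuity points of the degenerate limit. The paper's writeup differs only cosmetically, invoking Slutsky's theorem explicitly for the joint convergence step and separating out the absolute-value simplification, but the argument is the same.
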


\begin{proof} Let $\mat{W} \sim \text{Wishart}(k, \mat{I}_{d}/k)$, and let $\lambda_{\text{min}}$ and $\lambda_{\text{max}}$ denote the minimum and maximum eigenvalues of $\mat{W}$ respectively. Using Slutsky's theorem and the continuous mapping theorem we have the joint convergence result
\begin{align}
\begin{bmatrix}
\lvert 1-\lambda_{\text{min}}\rvert  \\
\lvert1-\lambda_{\text{max}}\rvert
\end{bmatrix}
\overset{p}{\to}
\begin{bmatrix}
\lvert 1-(1-\sqrt{\alpha})^2\rvert\\
\lvert 1-(1+\sqrt{\alpha})^2\rvert
\end{bmatrix} = \begin{bmatrix}
 2\sqrt{\alpha} - \alpha \\
 2\sqrt{\alpha} + \alpha
\end{bmatrix}, \label{eq:joint_pointwise_convergence}
\end{align}
where the equality uses the fact that $\alpha \in (0, 1]$. For large $k$ and $d$, the maximum eigenvalue $\lambda_{\text{max}}$ is expected to show greater deviation from one than the minimum eigenvalue $\lambda_{\text{min}}$. Over the interval $\alpha \in (0, 1]$ it holds that
\begin{align*}
\lvert 1-(1+\sqrt{\alpha})^2 \rvert  > \lvert1-(1-\sqrt{\alpha})^2 \rvert.
\end{align*}
Applying the continuous mapping theorem to the random vector in \eqref{eq:joint_pointwise_convergence},
\begin{align*}
\text{max}
\begin{bmatrix}
\lvert 1-\lambda_{\text{min}}\rvert  \\
\lvert1-\lambda_{\text{max}}\rvert
\end{bmatrix}
\overset{p}{\to}
\text{max}
\begin{bmatrix}
\lvert 1-(1-\sqrt{\alpha})^2\rvert\\
\lvert 1-(1+\sqrt{\alpha})^2\rvert
\end{bmatrix},
\end{align*}
yields $\text{max}(| 1- \lambda_{\text{min}}| , | 1- \lambda_{\text{max}}| ) \overset{p}{\to} \lvert 1-(1+\sqrt{\alpha})^2 \rvert$. Now as $(1+\sqrt{\alpha})^2$ is greater than one for all $\alpha > 0$, the absolute value sign can be removed in the limit giving the equivalent statement $
\text{max}(| 1- \lambda_{\text{min}}| , | 1- \lambda_{\text{max}}| ) \overset{p}{\to}  (1+\sqrt{\alpha})^2-1$. Recalling that  $\sigma_{\text{max}}(\mat{I}_{d}-\mat{W})= \text{max}(| 1- \lambda_{\text{min}}| , | 1- \lambda_{\text{max}}| )$, we establish convergence of the limiting singular value
\begin{align}
\sigma_{\text{max}}(\mat{I}_{d}-\mat{W})  \overset{p}{\to}(1+\sqrt{\alpha})^2-1. \label{eq:singular_value_pointwise_limit}
\end{align}
As convergence in probability to a constant implies convergence in distribution, the Portmanteau lemma then gives the probabilistic statement
\begin{align*}
\underset{n,d,k \to \infty}{\lim} \Pr(\sigma_{\text{max}}(\mat{I}_{d}-\mat{W})  \le \epsilon)  &= \begin{cases}
0 & \mbox{ \text{if }} \epsilon < (1+\sqrt{\alpha})^2-1, \\
1 & \mbox{ \text{if }} \epsilon > (1+\sqrt{\alpha})^2-1.
\end{cases}
\end{align*}
As $\epsilon = (1+\sqrt{\alpha})^{2}-1$ is a discontinuity point of the limiting  distribution function we do not make a statement about the case $\epsilon = (1+\sqrt{\alpha})^{2}-1$. 
We have the equality in limits
\begin{align*}
\underset{n,d,k \to \infty}{\lim} \Pr (\mat{S} \text{ is an $\epsilon$-subspace embedding for $\mat{A}$}) &= \underset{n,d,k \to \infty}{\lim} \Pr(\sigma_{\text{max}}(\mat{I}_{d}-\mat{W})  \le \epsilon),
\end{align*}
giving the final result.
\end{proof}
Given Lemma \ref{lem:gaussian_pointwise_limit}, we can move on to the proof of Theorem 1. Let $\mat{W} \sim \text{Wishart}(k, \mat{I}_{d}/k)$, and let $\lambda_{\text{min}}$ and $\lambda_{\text{max}}$ denote the minimum and maximum eigenvalues of $\mat{W}$ respectively. The majority of the proof comes down to showing that $\lambda_{\text{max}}$ controls the embedding probability. Using the Portmanteau lemma (Lemma \ref{lem:portmanteau}) we will show that
\begin{align*}
\underset{d,k \to \infty}{\lim} \Pr(\sigma_{\text{max}}(\mat{I}_{d}-\mat{W}) \le \epsilon)  &=  \underset{d,k \to \infty}{\lim} \Pr \left( | 1- \lambda_{\text{max}}|  \le \epsilon \right).
\end{align*}
Recall the key expression 
\begin{align*}
\Pr (\mat{S} \text{ is an $\epsilon$-subspace embedding for $\mat{A}$}) &= \Pr(\sigma_{\text{max}}(\mat{I}_{d}-\mat{W}) \le \epsilon) \\
&= \Pr\left( | 1- \lambda_{\text{min}}| \le \epsilon,   | 1- \lambda_{\text{max}}| \le \epsilon  \right).
\end{align*}
The Tracy-Widom law de{s}cribes the marginal distributions of $\lambda_{\text{min}}$ and $\lambda_{\text{max}}$. We would like to avoid working with the joint distribution of the extreme eigenvalues, and instead restrict attention to the distribution of the maximum. Let $\vect{X}$ denote the random vector $\vect{X} = (|1-\lambda_{\text{min}}|, |1-\lambda_{\text{max}}|)^{\T}$. Figure \ref{fig:convergnce} presents some diagrams that will be useful. We wish to know the probability that $\vect{X}$ lies in the  shaded region $C$ in panel (a). For every $\epsilon > 0 $ we have that $\Pr\left( | 1- \lambda_{\text{min}}| \le \epsilon,   | 1- \lambda_{\text{max}}| \le \epsilon  \right) = \Pr(\vect{X} \in C)$. The region $C$ can be expressed as $C=M-R$ where $M$ and $R$ are the shaded regions in panels (b) and (c) respectively. The probability $\Pr (\vect{X} \in M)$ represents the marginal probability that $|1-\lambda_{\text{max}}| \le \epsilon$. The probability $\Pr(\vect{X} \in R)$ represents the probability of the joint event that $(|1-\lambda_{\text{max}}| \le \epsilon,  |1-\lambda_{\text{min}}| > \epsilon)$. We have that
\begin{align*}
\Pr (\vect{X} \in C) &= \Pr (\vect{X} \in M) - \Pr(\vect{X} \in R).
\end{align*}
In panel (c) the dot-dash line gives the identity line where $|1-\lambda_{\text{max}}|=|1-\lambda_{\text{min}}|$. From the first part of the proof of Lemma \ref{lem:gaussian_pointwise_limit} we know that as $d,k$ tends to infinity $\vect{X}$ converges in distribution to the constant vector $\vect{X}_{L}=(|1-(1-\sqrt{\alpha})^2|, |1-(1+\sqrt{\alpha})^2|)^{\T}$. As such, asymptotically $|1-\lambda_{\text{max}}| > |1-\lambda_{\text{min}}|$ with probability one. Referring to panel $(c)$, the random vector $\vect{X}_{L}$ takes values in the region below the dot-dash line with probability one. The limiting random vector $\vect{X}_{L}$ thus satisfies $\Pr(\vect{X}_{L} \in R)=0$ and $\Pr(\vect{X}_{L} \in \partial R)=0$.  As $\vect{X} \overset{d}{\to} \vect{X}_{L}$,  Property \eqref{prop:boundary} of the Portmanteau lemma (Lemma \ref{lem:portmanteau}) gives that $\Pr(\vect{X} \in R)\to \Pr(\vect{X}_{L} \in R) =0$. The limiting probability is then 
\begin{align}
 \underset{d,k \to \infty}{\lim} \Pr\left( | 1- \lambda_{\text{min}}| \le \epsilon,   | 1- \lambda_{\text{max}}| \le \epsilon  \right)  &= \underset{d,k \to \infty}{\lim}\Pr (\vect{X} \in C) \nonumber \\
 &= \underset{d,k \to \infty}{\lim}\Pr (\vect{X} \in M) -\underset{d,k \to \infty}{\lim} \Pr(\vect{X} \in R) \nonumber \\
&=  \underset{d,k \to \infty}{\lim}\Pr (\vect{X} \in M)  - 0 \nonumber \\
&=  \underset{d,k \to \infty}{\lim} \Pr \left( | 1- \lambda_{\text{max}}|  \le \epsilon \right). \label{eq:lambda_max_control}
\end{align}
\begin{figure}
    \centering
    \includegraphics[width=\textwidth]{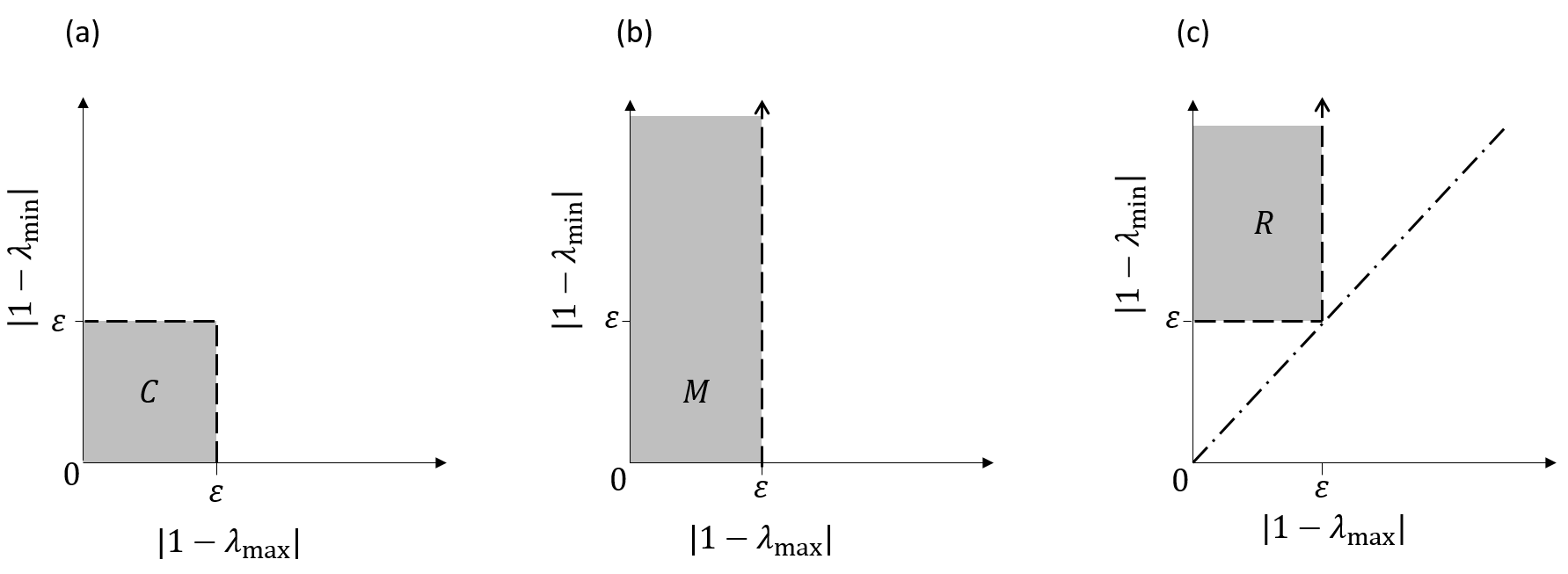}
    \caption[Regions of interest in determining the embedding probability]{Regions of interest in determining the embedding probability. To obtain an $\epsilon$-subspace embedding we require that $|1-\lambda_{\text{min}}|\le \epsilon$ and $|1-\lambda_{\text{max}}| \le \epsilon$. If we define $\vect{X} = (|1-\lambda_{\text{min}}|, |1-\lambda_{\text{max}}|)^{\T}$, we have that $\Pr (\vect{X} \in C) = \Pr (\vect{X} \in M) - \Pr(\vect{X} \in R)$. In panel (c) the dot-dash line gives the identity line where $|1-\lambda_{\text{max}}|=|1-\lambda_{\text{min}}|$.}
    \label{fig:convergnce}
\end{figure}
We have now isolated the maximum eigenvalue $\lambda_{\text{max}}$ as the determining factor in obtaining an $\epsilon$-subspace embedding. We make another application of the Portmanteau lemma to arrive at the final result. From here we can write 
\begin{align}
  \Pr \left( | 1- \lambda_{\text{max}}|   \le \epsilon \right) &= \Pr(\lambda_{\text{max}} \le \epsilon+1)-\Pr(\lambda_{\text{max}} \le 1-\epsilon). \label{eq:port_2}
\end{align}
From Theorem \ref{thm:pointwise_wishart} we know that $\lambda_{\text{max}}$ converges in distribution to the constant random variable $Z_{L}=(1+\sqrt{\alpha})^2$, where we have assumed $\alpha \in (0, 1]$. Let $B$ denote the interval $(-\infty, 1]$. The limiting random variable $Z_{L}$ satisfies $\Pr(Z_{L} \in B) =0$ and $\Pr(Z_{L} \in  \partial B) =0$. As such using property $\ref{prop:boundary}$ of the Portmanteau lemma,  $\underset{d,k \to \infty}{\lim} \Pr(\lambda_{\text{max}} \in B) = 0$. Now $\Pr(\lambda_{\text{max}} \le 1-\epsilon) \le \Pr(\lambda_{\text{max}} \in B)$ for any $\epsilon > 0$.    We can then conclude that $\underset{d,k \to \infty}{\lim}\Pr(\lambda_{\text{max}} \le 1-\epsilon)=0$ for any $\epsilon > 0$. Asymptotically, the term $\Pr(\lambda_{\text{max}} \le 1-\epsilon)$ drops out of the expression for the embedding probability. Taking limits over \eqref{eq:port_2},
\begin{align*}
 \underset{d,k \to \infty}{\lim} \Pr \left( | 1- \lambda_{\text{max}}|   \le \epsilon \right) &= \underset{d,k \to \infty}{\lim} \Pr(\lambda_{\text{max}} \le \epsilon+1)-\underset{d,k \to \infty}{\lim} \Pr(\lambda_{\text{max}} \le 1-\epsilon) \\
 &=  \underset{d,k \to \infty}{\lim} \Pr(\lambda_{\text{max}} \le \epsilon+1) - 0.
\end{align*}
The asymptotic embedding probability is then related to the asymptotic distribution of $\lambda_{\text{max}}$. The inequality can be manipulated to include the centering and scaling constants that appear in Theorem \ref{thm:tw_maximum_eigenvalue}, 
\begin{align*}
\underset{d,k \to \infty}{\lim} \Pr(\lambda_{\text{max}} \le \epsilon+1) &= \underset{d,k \to \infty}{\lim} \Pr\left(\dfrac{\lambda_{\text{max}}-\mu_{k,d}}{\sigma_{k,d}} \le \dfrac{\epsilon+1-\mu_{k,d}}{\sigma_{k,d}} \right).
\end{align*}
Let $Z$ be a random variable with Tracy-Widom distribution $F_{1}$. Now using Theorem \ref{thm:tw_maximum_eigenvalue} in the main text we have that for any fixed $d$ and $k$, it must hold that for any fixed $\epsilon > 0$,
\begin{align}
    \left\lvert \Pr\left(\dfrac{\lambda_{\text{max}}-\mu_{k,d}}{\sigma_{k,d}} \le \dfrac{\epsilon+1-\mu_{k,d}}{\sigma_{k,d}} \right) - \Pr \left( Z \le \dfrac{\epsilon+1-\mu_{k,d}}{\sigma_{k,d}}\right) \right\rvert &\le \label{eq:embedding_sup_error} \\ \underset{{z} \in \mathbb{R}}{\sup} \left\lvert \Pr\left(\dfrac{\lambda_{\text{max}}-\mu_{k,d}}{\sigma_{k,d}} \le z \right) - \Pr(Z \le z) \right\rvert.  \nonumber
\end{align}
As $(\lambda_{\text{max}}-\mu_{k,d})/\sigma_{k,d}$ converges in distribution to the continuous random variable $Z$, where $Z\sim F_{1}$, it follows from Lemma \ref{lem:uniform} that
\begin{align*}
    \underset{d,k \to \infty}{\lim} \   \underset{{z} \in \mathbb{R}}{\sup} \left\lvert \Pr\left(\dfrac{\lambda_{\text{max}}-\mu_{k,d}}{\sigma_{k,d}} \le z \right) - \Pr(Z \le z) \right\rvert =0.
\end{align*}
Now by the squeeze theorem, it holds that for all $\epsilon > 0$,
\begin{align*}
  \underset{d,k \to \infty}{\lim}  \   \left\lvert \Pr\left(\dfrac{\lambda_{\text{max}}-\mu_{k,d}}{\sigma_{k,d}} \le \dfrac{\epsilon+1-\mu_{k,d}}{\sigma_{k,d}} \right) - \Pr \left( Z \le \dfrac{\epsilon+1-\mu_{k,d}}{\sigma_{k,d}}\right) \right\rvert &= 0.
\end{align*}
From Theorem 1 of \cite{ma_accuracy_2012}, the error in the approximation \eqref{eq:embedding_sup_error} is $O(d^{-2/3})$ for even $d$. As discussed in \cite{ma_accuracy_2012}, it is difficult to give a rigorous error bound for odd $d$, however simulations suggest the $O(d^{-2/3})$  bound still holds. 
\end{proof}
\subsection{Proof of Theorem 2}
\begin{proof}Let $\mat{W}\sim \text{Wishart}(k, \mat{I}_{d}/k)$ and let $\lambda_{\text{min}}$ denote the minimum eigenvalue of $\mat{W}$. The probability of convergence can be expressed as
\begin{align}
   \Pr\left(\underset{t \to \infty}{\lim}\lVert\vect{\beta}_{F} - \vect{\beta}^{(t)} \rVert_{2} = 0\right)   &= \Pr\left(\lambda_{\text{min}} > 0.5\right) \nonumber \\
    &= \Pr\left(\dfrac{\log \lambda_{\text{min}}-\nu_{k,d}}{\tau_{k,d}} > \dfrac{\log(0.5)  - \nu_{k,d}}{\tau_{k,d}} \right) \label{eq:convergence_identity}
\end{align}
Let $Z$ be a random variable with Tracy-Widom distribution $F_{1}$. Now from Theorem \ref{thm:tw_minimum_eigenvalue}, $(\log \lambda_{\text{min}} - \nu_{k,d})/\tau_{k,d}$ converges in distribution to the continuous random variable $-Z$, where $Z$ is distributed according to the Tracy-Widom distribution $F_{1}$. For any fixed $d$ and $k$, it must hold that for any fixed $\epsilon > 0$,
\begin{align}
  \left\lvert \Pr\left(\dfrac{\log \lambda_{\text{min}}-\nu_{k,d}}{\tau_{k,d}} > \dfrac{\log(0.5)  - \nu_{k,d}}{\tau_{k,d}} \right) - \Pr \left( -Z > \dfrac{\log(0.5)  - \nu_{k,d}}{\tau_{k,d}}\right) \right\rvert &\le \\
  \underset{{z} \in \mathbb{R}}{\sup} \left\lvert \Pr\left(\dfrac{\log \lambda_{\text{min}}-\nu_{k,d}}{\tau_{k,d}} > z \right) - \Pr(-Z > z) \right\rvert. \label{eq:convergence_sup_error}
\end{align}
 From Lemma \ref{lem:uniform} it must hold that  
\begin{align*}
     \underset{d,k \to \infty}{\lim} \   \underset{{z} \in \mathbb{R}}{\sup} \left\lvert  \Pr\left(\dfrac{\log \lambda_{\text{min}}-\nu_{k,d}}{\tau_{k,d}} > \dfrac{\log(0.5)  - \nu_{k,d}}{\tau_{k,d}} \right)- \Pr(-Z >  z) \right\rvert =0.
\end{align*}
Now by the squeeze theorem, for all $\epsilon > 0$,
\begin{align*}
     \underset{d,k \to \infty}{\lim} \   \left\lvert  \Pr\left(\dfrac{\log \lambda_{\text{min}}-\nu_{k,d}}{\tau_{k,d}} > \dfrac{\log(0.5)  - \nu_{k,d}}{\tau_{k,d}} \right)- \Pr(-Z >  \dfrac{\log(0.5)  - \nu_{k,d}}{\tau_{k,d}} ) \right\rvert =0.
\end{align*}
Rearranging 
\begin{align*}
    \Pr\left(-Z >  \dfrac{\log(0.5)  - \nu_{k,d}}{\tau_{k,d}} \right) &=  \Pr\left( Z \le  \dfrac{\nu_{k,d}-\log(0.5)}{\tau_{k,d}} \right),
\end{align*}
and using the identity \eqref{eq:convergence_identity} gives the the final result,
\begin{align*}
   \quad \underset{n,d,k \to \infty}{\lim} \left\lvert \Pr \left(\underset{t \to \infty}{\lim}\lVert\vect{\beta}_{F} - \vect{\beta}^{(t)} \rVert_{2} = 0\right)- \Pr\left(Z \le \dfrac{\nu_{k,d}-\log (1/2)}{\tau_{k,d}}\right)\right\rvert = 0.
\end{align*}
\end{proof}
From Theorem 2 of \cite{ma_accuracy_2012}, the error in the approximation \eqref{eq:embedding_sup_error} is $O(d^{-2/3})$ for even $d$. As discussed in \cite{ma_accuracy_2012}, it is difficult to give a rigorous error bound for odd $d$, however simulations suggest the $O(d^{-2/3})$  bound still holds. 
\subsection{Proof of Theorem 3}
\begin{proof}
Assumption 1 on the leverage scores is sufficient to establish a central limit theorem for the data-oblivious sketches. 

\begin{theorem}[\cite{ahfock_statistical_2020}]
	\label{thm:sketching_clt}
	Consider a sequence of arbitrary $n \times d$ data matrices $\mat{A}_{(n)}$, where $d$ is fixed. Let \newline $\mat{A}_{(n)}=\mat{U}_{(n)}\mat{D}_{(n)}\mat{V}_{(n)}^{\T}$ represent the singular value decomposition of $\mat{A}_{(n)}$. Let $\mat{S}$ be a $k \times n$ Hadamard or Clarkson-Woodruff sketching matrix where $k$ is also fixed. Suppose that Assumption 1 on the maximum leverage score is satisfied. Then as $n$ tends to infinity
	\begin{align*}
[\widetilde{\mat{A}}\mat{V}_{(n)}\mat{D}_{(n)}^{-1} \mid  \mat{A}_{(n)}]\overset{d}{\to} \emph{MN}(\mat{0}, \mat{I}_{k}, \mat{I}_{d}/k).
	\end{align*}
\end{theorem}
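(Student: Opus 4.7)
The plan is to reduce the claim to Theorem \ref{thm:sketching_clt} (the central limit theorem for data-oblivious sketches) via the continuous mapping theorem, and then convert convergence in distribution into convergence of the embedding probability using the Portmanteau lemma (Lemma \ref{lem:portmanteau}).

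First I would observe that, since $\widetilde{\mat{A}}_{(n)} = \mat{S}_{(n)}\mat{A}_{(n)} = \mat{S}_{(n)}\mat{U}_{(n)}\mat{D}_{(n)}\mat{V}_{(n)}^{\T}$, one has the identity
\begin{equation*}
\widetilde{\mat{A}}_{(n)}\mat{V}_{(n)}\mat{D}_{(n)}^{-1} = \mat{S}_{(n)}\mat{U}_{(n)},
\end{equation*}
which identifies the quantity appearing in Theorem \ref{thm:sketching_clt} as the sketched left singular vectors of the source dataset. Under Assumption \ref{assum:leverage}, Theorem \ref{thm:sketching_clt} therefore supplies, for fixed $k$ and $d$ and as $n \to \infty$,
\begin{equation*}
\mat{S}_{(n)}\mat{U}_{(n)} \overset{d}{\to} \mat{G}, \qquad \mat{G} \sim \text{MN}(\mat{0}, \mat{I}_{k}, \mat{I}_{d}/k),
\end{equation*}
i.e.\ the $k \times d$ random matrix $\mat{S}_{(n)}\mat{U}_{(n)}$ converges in distribution to a matrix whose entries are i.i.d.\ $N(0, 1/k)$ regardless of whether $\mat{S}_{(n)}$ is the Hadamard or the Clarkson-Woodruff sketch.

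Next, by construction $\mat{W} := \mat{G}^{\T}\mat{G} \sim \text{Wishart}(k, \mat{I}_{d}/k)$. Since the map $\mat{M} \mapsto \sigma_{\text{max}}(\mat{I}_{d} - \mat{M}^{\T}\mat{M})$ is continuous from $\mathbb{R}^{k \times d}$ to $\mathbb{R}$, the continuous mapping theorem gives
\begin{equation*}
\sigma_{\text{max}}\!\left(\mat{I}_{d} - \mat{U}_{(n)}^{\T}\mat{S}_{(n)}^{\T}\mat{S}_{(n)}\mat{U}_{(n)}\right) \overset{d}{\to} \sigma_{\text{max}}(\mat{I}_{d} - \mat{W}).
\end{equation*}
Combining this with the finite-sample identity \eqref{eq:embedding_probability}, namely $\Pr(\mat{S}_{(n)}\text{ is an $\epsilon$-subspace embedding for }\mat{A}_{(n)}) = \Pr(\sigma_{\text{max}}(\mat{I}_{d} - \mat{U}_{(n)}^{\T}\mat{S}_{(n)}^{\T}\mat{S}_{(n)}\mat{U}_{(n)}) \le \epsilon)$, and invoking the Portmanteau lemma at the threshold $\epsilon$ yields the desired equality.

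The only technical subtlety is to justify Portmanteau at $\epsilon$: this requires continuity of the cdf of $\sigma_{\text{max}}(\mat{I}_{d}-\mat{W})$ at $\epsilon$, which holds at every real value because the eigenvalues $\lambda_{1}(\mat{W}),\dots,\lambda_{d}(\mat{W})$ have an absolutely continuous joint distribution, so $\sigma_{\text{max}}(\mat{I}_{d}-\mat{W}) = \max_{i}|1 - \lambda_{i}(\mat{W})|$ is atomless. The substantive engine of the proof is Theorem \ref{thm:sketching_clt}, which is cited as a black box; once that CLT is in hand, everything downstream is a routine continuous-mapping-plus-Portmanteau argument, and the main obstacle would only reappear if one wanted to extend the result to regimes where $k$ or $d$ also grow with $n$ (where the CLT input no longer applies directly and must be replaced by a high-dimensional analogue).
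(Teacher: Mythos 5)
There is a fundamental problem with your proposal: it is circular. The statement you were asked to prove \emph{is} Theorem \ref{thm:sketching_clt} itself --- the claim that, under Assumption \ref{assum:leverage}, the sketched orthonormalized data $\mat{S}_{(n)}\mat{U}_{(n)} = \widetilde{\mat{A}}\mat{V}_{(n)}\mat{D}_{(n)}^{-1}$ converges in distribution to $\text{MN}(\mat{0},\mat{I}_{k},\mat{I}_{d}/k)$ for the Hadamard and Clarkson--Woodruff sketches. Your argument opens by ``reducing the claim to Theorem \ref{thm:sketching_clt}'' and explicitly treats that theorem as a black box, so the substantive content of the statement is never established. What you go on to prove --- continuous mapping to get $\mat{U}_{(n)}^{\T}\mat{S}_{(n)}^{\T}\mat{S}_{(n)}\mat{U}_{(n)} \overset{d}{\to} \text{Wishart}(k,\mat{I}_{d}/k)$, then Portmanteau to pass to the embedding probability --- is a correct sketch of the paper's proof of Theorem \ref{thm:data_oblivious_asymptotic_embedding}, which is a \emph{different, downstream} result.

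To actually prove the statement you would need to engage with the structure of the two sketching matrices. For the Clarkson--Woodruff sketch, entry $(r,j)$ of $\mat{S}_{(n)}\mat{U}_{(n)}$ is a randomly signed, randomly partitioned sum $\sum_{i:h(i)=r} d_{i}u_{(n)ij}$ over the rows of $\mat{U}_{(n)}$; for the Hadamard sketch it is a weighted Rademacher sum $k^{-1/2}\sum_{i}H_{r i}d_{i}u_{(n)ij}$. In both cases one must verify a multivariate CLT (e.g.\ via Lindeberg--Feller and Cram\'er--Wold) for the $kd$-dimensional vector of entries, showing that the limiting covariance is $\mat{I}_{k}\otimes\mat{I}_{d}/k$ using $\mat{U}_{(n)}^{\T}\mat{U}_{(n)}=\mat{I}_{d}$, and that the vanishing maximum leverage score $\max_{i}\lVert\vect{u}_{(n)i}\rVert_{2}^{2}\to 0$ supplies exactly the uniform negligibility (Lindeberg) condition needed for the triangular array. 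None of this appears in your proposal. For what it is worth, the paper itself does not reprove this result either --- it imports it from \cite{ahfock_statistical_2020} --- but citing the target statement cannot serve as its proof.
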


As we only need to consider the sequence of orthonormal matrices $\mat{U}_{(n)}$ to determine the embedding probability, we can use Theorem \ref{thm:sketching_clt} with $\mat{D}_{(n)}$ and $\mat{V}_{(n)}$ set to the $d \times d$ identity matrix. As such we conclude that $\mat{S}_{(n)}\mat{U}_{(n)} \overset{d}{\to} \text{MN}(\mat{I}_{k}, \mat{I}_{d}/k)$. By the continuous mapping theorem it holds that for fixed $d$ and $k$, asymptotically with $n$,  $\mat{U}_{(n)}^{\T}\mat{S}_{(n)}^{\T}\mat{S}_{(n)}\mat{U}_{(n)} \overset{d}{\to} \text{Wishart}(k, \mat{I}_{d}/k)$. Another application of the continuous mapping theorem gives
\begin{align*}
\sigma_{\text{max}}(\mat{I}_{d}-\mat{U}_{(n)}^{\T}\mat{S}^{\T}_{(n)}\mat{S}_{(n)}\mat{U}_{(n)}) \overset{d}{\to} \sigma_{\text{max}}(\mat{I}_{d}-\mat{W}), 
\end{align*}
where $\mat{W} \sim \text{Wishart}(k, \mat{I}_{d}/k)$. We can use the continuous mapping theorem as the limiting Wishart matrix $\mat{W}$ has rank $d$ with probability one. The maximum singular value function is continuous over the range where $\mat{W}$ has full rank \citep{bhatia_matrix_1996}. By the Portmanteau  lemma it then holds that 
\begin{align*}
\underset{n \to \infty}{\lim} \Pr\left( \sigma_{\text{max}}(\mat{I}_{d}-\mat{U}_{(n)}^{\T}\mat{S}_{(n)}^{\T}\mat{S}_{(n)}\mat{U}_{(n)}) \le \epsilon\right)&= \Pr\left(\sigma_{\text{max}}(\mat{I}_{d}-\mat{W}) \le \epsilon \right).
\end{align*}
Now as
\begin{align*}
 \underset{n \to \infty}{\lim} \Pr\left( \mat{S}_{(n)} \text{ is an $\epsilon$-subspace embedding for $\mat{A}_{(n)}$} \right) &=  \underset{n \to \infty}{\lim} \Pr\left( \sigma_{\text{max}}(\mat{I}_{d}-\mat{U}_{(n)}^{\T}\mat{S}^{\T}_{(n)}\mat{S}_{(n)}\mat{U}_{(n)}) \le \epsilon\right) \\
 &= \Pr\left(\sigma_{\text{max}}(\mat{I}_{d}-\mat{W}) \le \epsilon \right),
\end{align*} 
we have the final result.
\end{proof}

\end{document}